\theoremstyle{plain}\newtheorem{Theorem}{Theorem}[section]
\theoremstyle{plain}
\theoremstyle{plain}\newtheorem{Corollary}[Theorem]{Corollary}
\theoremstyle{plain}\newtheorem{Lemma}[Theorem]{Lemma}
\theoremstyle{plain}\newtheorem{Proposition}[Theorem]{Proposition}
\theoremstyle{definition}
\theoremstyle{definition}
\theoremstyle{definition}
\theoremstyle{definition}\newtheorem{Remark}[Theorem]{Remark}
\theoremstyle{definition}
\theoremstyle{plain}\newtheorem{Equation}[Theorem]{}
           \def\tenk{\otimes_k}     
             \def\ten{\otimes}
\def\coker{\mathrm{coker}}
\def\Endbar{\underline{\mathrm{End}}}
\def\Ext{\mathrm{Ext}}           
\def\hatExt{\widehat{\mathrm{Ext}}} 
\def\barExt{\overline{\mathrm{Ext}}}
\def\hatHH{\widehat{HH}} \def\barHH{\overline{HH}}
\def\Hom{\mathrm{Hom}}           
\def\Hombar{\underline{\mathrm{Hom}}}
\def\ker{\mathrm{ker}}           
\def\Id{\mathrm{Id}}             \def\tenA{\otimes_A}
\def\Im{\mathrm{Im}}             \def\tenB{\otimes_B}
\def\Ind{\mathrm{Ind}}
\def\Mod{\mathrm{Mod}}           
\def\mod{\mathrm{mod}}      \def\modbar{\underline{\mathrm{mod}}}
\def\perf{\mathrm{perf}}    \def\perfbar{\underline{\mathrm{perf}}}
\def\res{\mathrm{res}}         
\def\Tr{\mathrm{Tr}}             
\def\tr{\mathrm{tr}}
\title{Tate duality and transfer in Hochschild cohomology} 
\author{Markus Linckelmann} 
\date{}
\begin{document}

\maketitle

\begin{abstract}
We show that dualising transfer maps in Hochschild cohomology
of symmetric algebras commutes with Tate duality. This extends 
a well-known result in group cohomology.
\end{abstract}

\section{Introduction}

Let $k$ be a field. For $V$ a $k$-vector space, 
denote by $V^\vee$ its $k$-dual $\Hom_k(V,k)$. A  finite-dimensional 
$k$-algebra $A$ is called {\it symmetric} if $A\cong$ $A^\vee$  as 
$A$-$A$-bimodules. The image $s\in$ $A^\vee$ of $1_A$ under such an 
isomorphism is called a {\it symmetrising form for} $A$. 
It is well-known that the Tate analogue  $\hatHH^*(A)$ of the 
Hochschild cohomology of a symmetric $k$-algebra $A$ satisfies a 
duality $(\hatHH^{-n}(A))^\vee\cong$  $\hatHH^{n-1}(A)$, for 
every integer $n$. If $A$, $B$ are two symmetric algebras and $M$ 
is an $A$-$B$-bimodule which is finitely generated as a left 
$A$-module and as a right $B$-module, then $M$ induces a transfer 
map $\tr_M : \hatHH^*(B)\to$ $\hatHH^*(A)$, and the dual 
$M^\vee$ induces a transfer map $\tr_{M^\vee} : \hatHH^*(A)\to$ 
$\hatHH^*(B)$. These transfer maps depend on the choices of
symmetrising forms for $A$ and $B$. In positive degree they 
coincide with the transfer maps constructed in \cite{Lintransfer}.
Similarly, for any two finitely generated $B$-modules $V$, $W$,
there is a transfer map $\tr_{M^\vee}=$ $\tr_{M^\vee}(V,W) :
\hatExt_A^n(M\tenB V,M\tenB W)\to$ $\hatExt_B^n(V,W)$.
The underlying constructions are special cases of a general principle 
associating transfer maps with pairs of adjoint functors between
triangulated categories; see 
\cite[\S 7]{Ligrblock} or Section \ref{Tatetransfersection} below 
for a brief review.

\begin{Theorem} \label{transferTatedual}
Let $A$, $B$ be symmetric $k$-algebras, and let $M$ be an 
$A$-$B$-bimodule which is finitely generated projective as
a left $A$-module and as a right $B$-module. For any integer 
$n$ we have a commutative diagram of $k$-vector spaces
$$\xymatrix{ 
\hatHH^{n-1}(A) \ar[rr]^{\tr_{M^\vee}} \ar[d] & & 
\hatHH^{n-1}(B) \ar[d] \\
(\hatHH^{-n}(A))^\vee \ar[rr]_{(\tr_M)^\vee} & &
(\hatHH^{-n}(B))^\vee  }$$
where the vertical maps are the Tate duality isomorphisms.
\end{Theorem}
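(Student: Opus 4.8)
The plan is to reduce everything to an identity between the two natural bilinear pairings on Hochschild cohomology, so that the commutativity of the diagram becomes the statement that transfer along $M$ and transfer along $M^\vee$ are adjoint with respect to these pairings. Concretely, the Tate duality isomorphism $\hatHH^{n-1}(A)\cong(\hatHH^{-n}(A))^\vee$ is induced by a pairing $\hatHH^{n-1}(A)\times\hatHH^{-n}(A)\to\hatHH^{-1}(A)\cong k$, the last isomorphism coming from the symmetrising form; the composition $\hatHH^*(A)\otimes\hatHH^*(A)\to\hatHH^*(A)$ is the graded-commutative cup product, which can be realised on a suitable complex (a Tate resolution of $A$ as an $A$-$A$-bimodule) as a Yoneda composition. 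So first I would fix such Tate resolutions for $A$ and $B$, express both vertical maps as ``cup-then-augment'' using the symmetrising forms $s_A$, $s_B$, and record the explicit description of $\tr_M$ and $\tr_{M^\vee}$ coming from the adjunction $(M\tenB-,M^\vee\tenA-)$ reviewed in Section~\ref{Tatetransfersection} — the unit and counit of this adjunction being built from the evaluation and coevaluation maps for $M$, which exist precisely because $M$ is finitely generated projective on each side.

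The chasing diagram then becomes: for $\zeta\in\hatHH^{n-1}(A)$ and $\eta\in\hatHH^{-n}(B)$, show that
$$ s_B\bigl(\tr_{M^\vee}(\zeta)\cdot\eta\bigr) = s_A\bigl(\zeta\cdot\tr_M(\eta)\bigr) $$
in $\hatHH^{-1}$-degree, i.e. after applying the degree-$(-1)$ augmentation. The key structural input is a projection-formula-type identity: $\tr_M$ and $\tr_{M^\vee}$ are maps of graded modules over the respective Hochschild cohomology rings in an appropriate sense, or more precisely, the transfer maps are compatible with the actions of $\hatHH^*(A)$ and $\hatHH^*(B)$ via the algebra homomorphisms $\hatHH^*(A)\to\hatHH^*(M)$ and $\hatHH^*(B)\to\hatHH^*(M)$ where $\hatHH^*(M)$ denotes $\hatExt^*_{A\otimes B^{\mathrm{op}}}(M,M)$. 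Using that $\zeta\cdot\tr_M(\eta)=\tr_M(\res^M_A(\zeta)\cdot\eta)$ (the projection formula for this transfer), the left-hand identity reduces to the claim that $s_A\circ\tr_M = s_B$ as functionals on $\hatHH^{-1}(M)$ — equivalently, that the transfer $\tr_M:\hatHH^{-1}(M)\to\hatHH^{-1}(A)$ followed by $s_A$ equals the transfer $\hatHH^{-1}(M)\to\hatHH^{-1}(B)$ followed by $s_B$. This last equality is where the choice of symmetrising forms enters essentially and should follow from the fact that both composites compute the same trace of an endomorphism of $M$, using that $M$ is projective on both sides so that $M\tenB-$ and $-\tenA M$ both preserve the relevant finiteness and the evaluation pairings are mutually transpose.

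I expect the main obstacle to be bookkeeping the signs and the degree shifts correctly: the Tate duality isomorphism lands in degree $n-1$ against degree $-n$, the cup product is graded-commutative, and the stable category introduces a shift between $\Hom$-groups and $\hatExt$-groups, so the identity ``$\tr_{M^\vee}$ dualises to $(\tr_M)^\vee$'' rather than to $-(\tr_M)^\vee$ requires care about how the symmetrising forms interact with the suspension. A secondary subtlety is making the adjunction $(M\tenB-, M^\vee\tenA-)$ genuinely two-sided — i.e. that $M^\vee\tenA-$ is simultaneously left and right adjoint to $M\tenB-$, which is exactly what projectivity of $M$ on both sides buys, and what makes the two transfer maps ``the same construction'' read in opposite directions; getting the counit/unit of one to be the $k$-dual of the unit/counit of the other is the technical heart and should be isolated as a lemma about the functors before touching cohomology. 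Once that functorial lemma is in place, the cohomological statement is a formal consequence via the realisation of cup products and Tate duality on the bar-type or Tate resolutions, plus the projection formula, which for transfer maps arising from adjunctions is standard.
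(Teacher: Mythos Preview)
Your strategy---reduce Tate duality to a bilinear pairing $\langle\zeta,\eta\rangle=\epsilon(\zeta\cdot\eta)$ with $\epsilon:\hatHH^{-1}\to k$, then prove the adjointness identity $\epsilon_B(\tr_{M^\vee}(\zeta)\cdot\eta)=\epsilon_A(\zeta\cdot\tr_M(\eta))$ via a projection formula and a trace comparison in degree $-1$---is a genuinely different route from the paper's. The paper does not use the projection formula at all. Instead it factors $\tr_M$ and $\tr_{M^\vee}$ as a composite of two adjunction isomorphisms framed by (pre-)composition with adjunction counits (Lemma~\ref{transferLemma}), and then proves directly that each of these four pieces commutes with Tate duality: the counit pieces by naturality, and the adjunction isomorphisms by an explicit compatibility statement (Corollary~\ref{bimoduleTateadj}, reduced via Proposition~\ref{Tateadj} and Lemma~\ref{Tateprojadj} to the isomorphism \ref{VPdual} on projectives). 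Your approach is more conceptual and closer in spirit to the classical group-cohomology argument; the paper's is more elementary in that it never leaves the level of $\Hom$-spaces and needs no multiplicative structure beyond what is built into the adjunctions.

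That said, your outline has real gaps. First, $\hatHH^{-1}(A)\cong k$ is false in general: Tate duality gives $\hatHH^{-1}(A)\cong\hatHH^0(A)^\vee$, and $\hatHH^0(A)$ is the stable centre, not $k$; what you need is only the linear form $\epsilon_A:\hatHH^{-1}(A)\to k$ dual to the class of $\Id_A$. Second, the sentence ``$s_A\circ\tr_M=s_B$ as functionals on $\hatHH^{-1}(M)$'' does not type-check, since $\tr_M$ has source $\hatHH^*(B)$; what you actually need are the two trace maps $t_A,t_B:\hatExt^{-1}_{A\tenk B^0}(M,M)\to k$ obtained by tensoring with $M^\vee$ on the appropriate side and then applying $\epsilon_A$ or $\epsilon_B$, together with the identity $\epsilon_A\circ t_A=\epsilon_B\circ t_B$. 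Third, both the projection formula in the Tate setting and this degree-$(-1)$ trace identity require proof, and the latter essentially encodes the fact you flag at the end---that the unit of one adjunction is $k$-dual to the counit of the other (this is \ref{unitdual} and \ref{counitdual} in the paper). So the ``technical heart'' you correctly isolate is in fact the same lemma the paper relies on; the difference is that the paper feeds it directly into a diagram chase rather than through the cup-product pairing.
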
 

Theorem \ref{transferTatedual} holds with $M$ replaced by
a bounded complex of $A$-$B$-bimodules $X$ whose components $X_i$
are finitely generated projective as left and right modules.
This follow, for instance, from the formula $\tr_X=$ $\sum_{i} (-1)^i \tr_{X_i}$ 
in \cite[2.11. (ii)]{Lintransfer}, with $i$
running over the integers for which $X_i$ is nonzero. 
Alternatively, if $U$ is a finitely generated projective $A$-$B$-bimodule, then
$\tr_U$ is zero on $\hatHH^*(B)$. By a standard argument due to Rickard
(appearing at the end of the proof of \cite[2.1]{RickStable}),
$X$ is quasi-isomorphic to a complex with at most one nonprojective 
component. Thus there is a bimodule $M$ such that $\tr_X=$ $\tr_M$ on 
$\hatHH^*(B)$. In particular, there is no loss of generality in
stating Theorem \ref{transferTatedual} for bimodules rather than 
complexes.

\begin{Theorem} \label{transferVWTatedual}
Let $A$, $B$ be symmetric $k$-algebras, and let $M$ be an 
$A$-$B$-bimodule which is finitely generated projective as
a left $A$-module and as a right $B$-module. Let $V$, $W$ be
finitely generated $B$-modules. For any integer 
$n$ we have a commutative diagram of $k$-vector spaces
$$\xymatrix{
\hatExt^{n-1}_B(V,W) \ar[rr]^(.4){M\tenB-}\ar[d] & & 
\hatExt^{n-1}_A(M\tenB V,M\tenB W) \ar[rr]^(.6){\tr_{M^\vee}} \ar[d] & & 
\hatExt^{n-1}_B(V,W) \ar[d] \\
\hatExt_B^{-n}(W,V)^\vee \ar[rr]_(.4){(\tr_{M^\vee})^\vee} & & 
\hatExt_A^{-n}(M\tenB W,M\tenB V)^\vee \ar[rr]_(.6){(M\tenB-)^\vee} & & 
\hatExt_B^{-n}(W,V)^\vee }$$
where $\tr_{M^\vee}=$ $\tr_{M^\vee}(V,W)$ and
the vertical maps are the Tate duality isomorphisms.
\end{Theorem}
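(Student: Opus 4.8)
My plan is to reduce the commutativity of the whole diagram to a single adjunction identity between the functor $M\tenB-$ and the transfer $\tr_{M^\vee}$, and then to establish that identity; the only genuinely new feature compared with Theorem~\ref{transferTatedual} is that the left-hand square is built from the functor $M\tenB-$ rather than from a transfer map. Write $\langle -,-\rangle_B\colon\hatExt_B^m(V,W)\times\hatExt_B^{-m-1}(W,V)\to k$ for the Tate duality pairing attached to the chosen symmetrising form of $B$, and similarly $\langle -,-\rangle_A$; the vertical maps of the diagram are $\phi\mapsto\langle\phi,-\rangle$. Since each Tate pairing is symmetric (up to a sign depending only on the degrees, which cancels), the commutativity of the diagram is equivalent to the single identity
\[
\langle\, M\tenB\phi,\ \psi\,\rangle_A \;=\; \langle\, \phi,\ \tr_{M^\vee}(W,V)(\psi)\,\rangle_B
\]
for all finitely generated $B$-modules $V,W$, all $n\in\Z$, all $\phi\in\hatExt_B^{n-1}(V,W)$ and all $\psi\in\hatExt_A^{-n}(M\tenB W,M\tenB V)$ --- that is, $M\tenB-$ and $\tr_{M^\vee}$ are \emph{mutually adjoint} with respect to the Tate pairings. (The left-hand square is this identity read directly, and the right-hand square is it read with $V,W$ interchanged.) This is exactly the coefficient analogue of the content of Theorem~\ref{transferTatedual}.

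Two standard ingredients are needed. First, each Tate pairing is composition followed by a natural trace: for $\zeta\in\hatExt_B^m(V,W)$ and $\eta\in\hatExt_B^{-m-1}(W,V)$ one has $\langle\zeta,\eta\rangle_B=t_B(\zeta\circ\eta)=t_B(\eta\circ\zeta)$, where $t_B$ is the trace form on $\hatExt_B^{-1}$ determined by the symmetrising form of $B$, natural in the module and cyclic, which is what makes the two expressions agree. Second, the hypothesis that $M$ is finitely generated projective as a left $A$-module and as a right $B$-module, combined with the two symmetrising forms, gives $B$-$A$-bimodule isomorphisms $M^\vee\cong\Hom_A(M,A)\cong\Hom_B(M,B)$; equivalently $M\tenB-\colon\Mod B\to\Mod A$ is a two-sided adjoint of $M^\vee\tenA-$, with the four unit/counit transformations realised by explicit bimodule maps among $B$, $A$, $M^\vee\tenA M$ and $M\tenB M^\vee$ built from the forms. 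By the construction reviewed in \cite[\S7]{Ligrblock}, $\tr_{M^\vee}(W,V)(\psi)=\epsilon_V\circ(M^\vee\tenA\psi)\circ\eta_W$, where $\eta\colon\Id\to M^\vee\tenA M\tenB-$ is the unit of the adjunction with $M\tenB-$ left adjoint to $M^\vee\tenA-$, and $\epsilon\colon M^\vee\tenA M\tenB-\to\Id$ is the counit of the opposite adjunction; these are carried by the bimodule maps $B\to M^\vee\tenA M$ (from the form of $A$) and $M^\vee\tenA M\to B$ (from the form of $B$).

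Substituting all of this, the identity becomes $t_A\big((M\tenB\phi)\circ\psi\big)=t_B\big(\phi\circ\epsilon_V\circ(M^\vee\tenA\psi)\circ\eta_W\big)$. Using cyclicity of $t_B$ and naturality of $\eta$ and $\epsilon$, the right-hand side rewrites as $t_B$ of a class obtained from $M\tenB\phi$ by composing with the evaluation/coevaluation maps between $M$ and $M^\vee$, and the two sides then differ only by transport along $M^\vee\cong\Hom_A(M,A)\cong\Hom_B(M,B)$. In this way the assertion reduces to the single statement that $t_A$, pulled back along $M\tenB-$, equals $t_B$ followed by $\tr_{M^\vee}$ --- essentially the normalisation of the transfer. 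To get it for all $n$, I would check one fixed degree by hand (say $n=1$, where $\hatExt^0_B(V,W)=\Hombar_B(V,W)$ is paired with $\hatExt^{-1}_B(W,V)$) and propagate by dimension shift: because $M$ is projective on both sides, $M\tenB-$, $M^\vee\tenA-$ and $\tr_{M^\vee}$ all commute with syzygies and cosyzygies, hence with the shift of the underlying triangulated category, hence with the dimension-shift isomorphisms that both define $\hatExt$ in negative degrees and build the Tate pairings from $t_A$ and $t_B$; the signs produced by these shifts occur symmetrically on the two sides.

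The main obstacle is not a single hard computation but the triangulated bookkeeping: nailing down $t_B$ in terms of the symmetrising form, verifying that $M\tenB-$ and $M^\vee\tenA-$ respect the syzygy functors and the dimension-shift isomorphisms underlying Tate duality, and tracking signs carefully enough that the single-degree identity really does propagate to every $n$. Once all of that is arranged, the single-degree identity is a formal consequence of the defining adjunction property of $\tr_{M^\vee}$ together with the fact that the symmetrising forms of $A$ and $B$ make the left and right adjoints of $M\tenB-$ coincide. As an alternative organisation one could try to obtain the right-hand square from a version of Theorem~\ref{transferTatedual} with coefficients in the $B$-$B$-bimodule $\Hom_k(V,W)$ and then deduce the left-hand square from the symmetry of the hypotheses under interchanging $A\leftrightarrow B$ and $M\leftrightarrow M^\vee$; but this route still needs the comparison $\hatExt_B^*(V,W)\cong\hatHH^*(B;\Hom_k(V,W))$ to hold in all degrees, which would have to be checked separately.
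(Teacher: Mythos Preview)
Your reduction of both squares to the single adjointness identity
\[
\langle M\tenB\phi,\psi\rangle_A \;=\; \langle \phi,\tr_{M^\vee}(\psi)\rangle_B
\]
is correct, and this is also what the paper is proving. The paper likewise observes that one square implies the other (it phrases this via the double-duality statement \ref{Tatedoubledual}/\ref{Tatedoubledualform}, which is exactly your ``symmetry of the pairing'' observation), and likewise reduces to a single degree by replacing $W$ with $\Sigma^n(W)$.

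Where you diverge is in how you establish the identity in that single degree. You propose to write the Tate pairing as ``composition followed by a trace $t_B$'', unwind $\tr_{M^\vee}$ into unit/counit data, and then check by hand that $t_A$ pulled back along $M\tenB-$ matches $t_B\circ\tr_{M^\vee}$. This can be made to work, but you have correctly identified that the bookkeeping is the whole difficulty, and your outline does not yet pin down $t_B$ or carry out that comparison. The paper avoids this computation entirely by a structural decomposition: Lemma~\ref{transferVWLemma} factors $\tr_{M^\vee}$ as the adjunction isomorphism
\[
\Hombar_A(M\tenB V,M\tenB\Omega(W))\;\cong\;\Hombar_B(V,M^\vee\tenA M\tenB\Omega(W))
\]
followed by composition with the counit $\eta_{M^\vee}$. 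The second step commutes with Tate duality by \emph{naturality} alone, and the first step commutes with Tate duality by Proposition~\ref{Tateadj}, which is the genuine work (proved earlier, in \S\ref{Tateadjsection}, by reducing to the projective case \ref{VPdual} and ultimately to Lemma~\ref{TateAUadj}). In effect, the ``single-degree verification'' you propose is exactly Proposition~\ref{Tateadj}; once you have it, no trace-form computation or sign-tracking through dimension shifts is needed, because both the adjunction isomorphism and the counit map are already known to intertwine the Tate duality isomorphisms \ref{TateHomdual} on the nose.

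So your strategy is sound, but the cleanest execution is to isolate Proposition~\ref{Tateadj} (compatibility of the adjunction isomorphism with Tate duality) as a separate lemma and then invoke it together with naturality, rather than to compute with traces directly. Your alternative route via $\hatHH^*(B;\Hom_k(V,W))$ is unnecessary.
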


\begin{Remark}
Let $G$ a finite group and $H$ a subgroup of $G$. Tate duality 
for group cohomology is a canonical isomorphism 
$\hat H^{-n}(G;k)^{\vee}\cong$ $\hat H^{n-1}(G;k)$,
for any integer $n$.  Any $k(G\times G)$-module can be viewed
as a $kG$-$kG$-bimodule through the isomorphism $k(G\times G)\cong$
$kG\tenk (kG)^0$ sending $(x,y)\in$ $G\times G$ to $x\ten y^{-1}$. 
Denote by $\Delta G$ the diagonal
subgroup $\Delta G=$ $\{(x,x)\ |\ x\in G\}$ of $G\times G$.
The induction functor $\Ind_{\Delta G}^{G\times G}$ sends the
trivial $k\Delta G$-module to $\Ind_{\Delta G}^{G\times G}(k)\cong$
$kG$, the latter viewed as a $k(G\times G)$-module with $(x,y)\in$
$G\times G$ acting by left multiplication with $x$ and right
multiplication with $y^{-1}$. Combined with the canonical
isomorphism $G\cong$ $\Delta G$ and the interpretation of 
$k(G\times G)$-modules as $kG$-$kG$-bimodules, this functor sends the
trivial $kG$-module to the $kG$-$kG$-bimodule $kG$, and 
induces a split injective graded algebra homomorphism 
$\delta_G : \hat H^*(G;k)\to \hatHH^*(kG)$; similarly for $H$ 
instead of $G$. Following \cite[4.6, 4.7]{Lintransfer}, the 
restriction and transfer maps between $H^*(G;k)$ and $H^*(H;k)$ 
extend to transfer maps between $HH^*(kG)$ and $HH^*(kH)$ induced 
by the $kG$-$kH$-bimodule $kG_H$ and its dual ${_H{kG}}$; a similar 
statement holds for their Tate analogues. The above theorems can
be used to show the well-known fact that Tate duality identifies the 
$k$-dual of the usual transfer map 
$\tr^G_H: \hat H^{-n}(H;k)\to$  $\hat H^{-n}(G;k)$ with the restriction 
map $\res^G_H : \hat H^{n-1}(G;k)\to$ $\hat H^{n-1}(H;k)$. 
This fact is applied, for instance, in Benson's approach \cite{BenNY}
to Greenlees' local cohomology spectral sequence \cite{Greenlees}.
Theorem \ref{transferTatedual} might provide 
one of the technical ingredients towards constructing similar local 
cohomology spectral sequences in Hochschild cohomology of symmetric 
algebras. The background motivation is the question whether the 
Castelnuovo-Mumford regularity of the Hochschild cohomology of 
symmetric algebras is invariant under separable equivalences 
(cf. \cite[3.1, 3.2]{LinHecke}).
\end{Remark}

The proofs of  the above theorems are formal 
verifications, based on explicit descriptions of Tate duality 
for symmetric algebras (reviewed in \S \ref{Tatedualitysection}) 
and of well-known adjunction maps (reviewed in \S \ref{symmetricsection}).
These are used (in \S \ref{Tateadjsection}) to show that Tate duality 
and adjunction are compatible. After a brief review of transfer maps 
in Tate-Hochschild cohomology (in \S \ref{Tatetransfersection}) 
the proofs of  Theorem \ref{transferTatedual} and 
Theorem \ref{transferVWTatedual} are completed in 
\S \ref{proofsection11} and \S \ref{proofsection13}, respectively.
We conclude with some remarks on extending
results of Benson and Carlson \cite{BeCaTate} on products in negative 
group cohomology to the Hochschild cohomology of symmetric algebras 
in \S \ref{negativeTatesection}. The results in this last section have 
independently been obtained in work of Bergh, Jorgensen, and Oppermann
\cite[\S 3]{BJO}.

\medskip
If not stated otherwise, modules are unitary left
modules. For any $k$-algebra $A$ we denote by $\Mod(A)$ the category
of left $A$-modules, and by $\mod(A)$ the subcategory of finitely 
generated $A$-modules. Right $A$-modules are identified with
$A^0$-modules, where $A^0$ is the opposite algebra of $A$. 
Given two $k$-algebras $A$, $B$, we adopt the convention that
for any $A$-$B$-bimodule $M$, the left and right $k$-vector space
structure on $M$ induced by the unit maps $k\to A$ and $k\to B$ of 
$A$ and $B$ coincide. Thus we may consider the $A$-$B$-bimodule $M$ 
as an $A\tenk B^0$-module or as a right $A^0\tenk B$-modules, 
whichever is more convenient. We denote by $\perf(A,B)$ the category
of $A$-$B$-bimodules which are finitely generated projective as left
$A$-modules and as right $B$-modules. Given three $k$-algebras $A$, $B$, $C$,
an $A$-$B$-bimodule $M$, an $A$-$C$-bimodule $N$, and a $C$-$B$-bimodule
$N'$, we consider as usual $\Hom_A(M,N)$ as a $B$-$C$-bimodule via 
$(b\cdot\varphi\cdot c)(m)=$ $\varphi(mb)c$, where $\varphi\in$ 
$\Hom_A(M,N)$, $b\in$ $B$, $c\in$ $C$, and $m\in$ $M$. Similarly, 
we consider $\Hom_{B^0}(M,N')$ as a $C$-$A$-bimodule via
$(c\cdot\psi\cdot a)(m)=$ $c\psi(am)$, where $\psi\in$ 
$\Hom_{B^0}(M,N')$, $a\in$ $A$, $c\in$ $C$, and $m\in$ $M$.

\section{Background material on Tate duality} \label{Tatedualitysection}

Tate duality for symmetric algebras is a special case of 
Auslander-Reiten duality. We need
an explicit description of Tate duality in order to relate
it to the adjunction units and counits arising in the 
definition of transfer maps on Hochschild cohomology.
This is a specialisation to symmetric algebras of arguments 
and results due to Auslander and Reiten in \cite{ARIII}. 
(By taking into account the Nakayama functor, this description 
yields the corresponding duality for selfinjective algebras,
but we will not need this degree of generality in this paper; 
see e.g. \cite{BJ} for more details). 
Let $A$ be a symmetric $k$-algebra; that is, $A\cong$ $A^\vee$ as
$A$-$A$-bimodules. Choose a symmetrising form $s\in$ $A^\vee$; 
that is, $s$ is the image of $1_A$ under a chosen 
bimodule isomorphism $\Phi : A\cong$ $A^\vee$.  Since $A$ is
generated by $1_A$ as a left or right $A$-module and since
$a\cdot 1_A=a=$ $a\cdot 1_A$, it follows that the map $\Phi$
sends $a\in$ $A$ to the linear form $a\cdot s$ defined by 
$(a\cdot s)(a')=$ $s(aa')=$ $s(a'a)=$ $(s\cdot a)(a')$ for 
all $a'\in$ $A$. For any
two $A$-modules $U$, $V$, we denote by $\Hom_A^{pr}(U,V)$ the
space of $A$-homomorphisms from $U$ to $V$ which factor through
a projective $A$-module, and we set $\Hombar_A(U,V)=$
$\Hom_A(U,V)/\Hom_A^{pr}(U,V)$. The stable module category of
$A$ is the $k$-linear category $\modbar(A)$ having the same
objects as $\mod(A)$, with morphism spaces
$\Hombar_A(U,V)$ for any two finitely generated left $A$-modules,
where the composition of morphisms in $\modbar(A)$ is induced
by the composition of $A$-homomorphisms. 
For any finitely generated left $A$-module $U$ choose a projective
$A$-module $P_U$, a surjective $A$-homomrphism $\pi_U : P_U\to$
$U$, an injective $A$-module $I_U$ and an injective $A$-homomorphism
$\iota_U : U\to$ $I_U$. Set $\Omega_A(U)=$ $\ker(\pi_U)$, and
$\Sigma_A =$ $\coker(\iota_U)$. If no confusion arises, we simply
write $\Omega$ and $\Sigma$ instead of $\Omega_A$ and $\Sigma_A$.
The operators $\Sigma$ and $\Omega$ induce inverse self-equivalences,
still denoted $\Omega$ and $\Sigma$, 
on $\modbar(A)$; these functors do not depend on the choice of
the $(P_U,\pi_U)$ and $(I_U,\iota_U)$ in the sense that any other
choice yields functors which are isomorphic to $\Omega$ and
$\Sigma$ through uniquely determined isomorphisms of functors.
The category $\modbar(A)$, together with the self-equivalence $\Sigma$
and triangles induced by short exact sequences in $\mod(A)$ is
triangulated.
Let $U$, $V$ be finitely generated left $A$-modules. For any
integer $n$ set $\hatExt^n_A(U,V)=$ $\Hombar_A(U,\Sigma^n(V))$.
Tate duality for symmetric algebras states that for any integer
$n$ there is an isomorphism
\begin{Equation} \label{Tatedualitysymm}
$$\hatExt^{n-1}_A(V,U) \cong
\hatExt^{-n}_A(U,V)^\vee\ ,$$
\end{Equation}
which is natural in $U$ and $V$. Equivalently, there is a natural
nondegenerate bilinear form
\begin{Equation} \label{Tatenondeg}
$$\langle -,-\rangle :  \hatExt^{n-1}_A(V,U) \times \hatExt^{-n}_A(U,V) \to k\ .$$
\end{Equation}
The isomorphism \ref{Tatedualitysymm} is equivalent to a natural isomorphism
\begin{Equation} \label{TateHomdual}
$$\Hombar_A(V,\Omega(U))\cong \Hombar_A(U,V)^\vee\ .$$
\end{Equation}
Indeed, the isomorphism \ref{TateHomdual} is the special case 
$n=0$ of the isomorphism \ref{Tatedualitysymm}, and conversely, 
\ref{Tatedualitysymm} follows from \ref{TateHomdual} applied with
$\Omega^n(V)$ instead of $V$. The naturality implies in particular
that \ref{Tatedualitysymm} and \ref{TateHomdual} are isomorphisms
of $\Endbar_A(U)$-$\Endbar_A(V)$-bimodules.  
As mentioned before, we will need an explicit description of the 
isomorphism  \ref{TateHomdual} in order to compare it
to transfer maps and their duals. For $\tau\in$ $\Hom_A(U,A)$
and $v\in$ $V$ defined $\lambda_{\tau,v}\in$ $\Hom_A(U,V)$
by setting $\lambda_{\tau,v}(u)=$ $\tau(v)u$ for all $u\in$ $U$.
Note that $\lambda_{\tau,v}$ is the composition of the map
$\tau : U\to$ $A$ followed by the map $A\to$ $V$ sending
$1_A$ to $v$; in particular, $\lambda_{\tau,v}$ factors
through a projective $A$-module. This yields a map
$$\Phi_{U,V} : \Hom_A(U,A)\tenA V \to \Hom_A(U,V)$$
sending $\tau\ten v$ to the map $\lambda_{\tau,v}$.
The maps $\Phi_{U,V}$ are natural in $U$ and $V$. By the above remarks,
the image of $\Phi_{U,V}$ is equal to $\Hom_A^{pr}(U,V)$, and if $U$ 
is finitely generated projective, then $\Phi_{U,V}$ is an isomorphism.
The map sending $\tau\in$ $\Hom_A(U,A)$ to $s\circ\tau$ is a 
natural isomorphism of right $A$-modules $\Hom_A(U,A)\cong$ 
$U^\vee$. Thus, for any finitely generated projective $A$-module 
$P$ we have an isomorphism
$$P^\vee\tenA V \cong \Hom_A(P,V)$$ 
sending $s\circ\alpha\ten v$ to the map $\lambda_{\alpha,v}$ as 
defined above; that is, to the map $x\mapsto\alpha(x)v$, where
$\alpha\in$ $\Hom_A(P,A)$, $v\in$ $V$, and $x\in$ $P$.
Dualising the left term and applying the standard adjunction
and double duality $P^{\vee\vee}\cong$ $P$ 
yields an isomorphism $(P^\vee\tenA V)^\vee\cong$ $\Hom_A(V,P)$.
Together with the previous isomorphism, we obtain an isomorphism
\begin{Equation} \label{VPdual}
$$\Hom_A(V,P) \cong \Hom_A(P,V)^\vee$$
\end{Equation}
sending $\beta\in$ $\Hom_A(V,P)$ to the unique map $\hat\beta\in$ 
$\Hom_A(P,V)^\vee$ satisfying $\hat\beta(\lambda_{\alpha,v})=$ 
$s(\alpha(\beta(v)))$. In the case $A=$ $k$, viewed as
symmetric algebra with $\Id_k$ as symmetrising form, the
isomorphism \ref{VPdual} becomes the canonical isomorphism
$\Hom_k(V,k)=$ $V^\vee\cong$ $\Hom_k(k,V)^\vee$. Let 
$$\xymatrix{ P_1 \ar[r]^{\delta} & P_0 \ar[r]^{\pi} & U \ar[r] & 0}$$
be an exact sequence of $A$-modules, with $P_0=$ $P_U$ and $P_1=$ 
$P_{\Omega(U)}$ projective.
Then $\ker(\pi)=$ $\Im(\delta)=$ $\Omega(U)$. The inclusion 
$\Omega(U)\hookrightarrow$ $P_0$ induces an injective map 
$\Hom_A(V,\Omega(U))\to$ $\Hom_A(V,P_0)$. The map $\delta$ induces 
a map $\Hom_A(V,P_1)\to$ $\Hom_A(V,P_0)$. An $A$-homomorphism
from $U$ to $V$ factors through a projective module if and only if 
it factors through the map $\delta$, viewed as a map from $P_1$ to 
the submodule $\Omega(U)$ of $P_0$. It follows that the image of 
the map $\Hom_A(V,P_1)\to$ $\Hom_A(V,P_0)$ can be identified with 
the subspace $\Hom_A^{pr}(V,\Omega(U))$ of $\Hom_A(V,\Omega(U))$,
where $\Hom_A(V,\Omega(U))$ is viewed as a subspace of $\Hom_A(V,P_0)$. 
Applying the contravariant functor $\Hom_A(-,V)$ to the previous exact 
sequence yields an exact sequence
$$\xymatrix{0\ar[r]&\Hom_A(U,V)\ar[r]&\Hom_A(P_0,V)\ar[r]&\Hom_A(P_1,V)}$$
which remains exact upon applying $k$-duality. Thus we obtain a
commutative diagram
\begin{Equation} \label{Tatedualityproof}
$$\xymatrix{ \Hom_A(V,P_1) \ar[rr]^{\cong}\ar[d] & & 
\Hom_A(P_1,V)^\vee \ar[d] \\
\Hom_A(V,P_0) \ar[rr]^{\cong} & & \Hom_A(P_0,V)^\vee\ar[d] \\
\Hom_A(V,\Omega(U))\ar[u]\ar[rr]_{T} & & 
\Hom_A(U,V)^\vee \ar[d] \\
& & 0 }$$
\end{Equation}
in which the right column is exact, and where the two horizontal
isomorphisms are from \ref{VPdual}. The map $T$ induces the 
desired Tate duality isomorphism. To see this, note first that 
since $\Hombar_A(U,V)$ is a quotient of $\Hom_A(U,V)$, its dual 
can be identified to the subspace of $\Hom_A(U,V)^\vee$ which 
annihilates $\Hom_A^{pr}(U,V)$. The elements in $\Hom_A^{pr}(U,V)$ 
are finite sums of maps of the form $\lambda_{\kappa, v}$, where 
$\kappa\in$ $\Hom_A(U,A)$ and $v\in$ $V$. The image of this space 
in $\Hom_A(P_0,V)$ obtained from precomposing with $\pi$ consists 
of finite sums of maps $\lambda_{\tau,v}$, where $\tau\in$ 
$\Hom_A(P_0,V)$ and $v\in$ $V$ such that $\tau$ factors through 
$\pi$, or equivalently, such that $\tau$ annihilates the submodule 
$\Omega(U)$ of $P_0$. But this is exactly the subspace of all 
$\hat\beta$, where $\beta\in$ $\Hom_A(V,P_0)$ satisfies 
$\Im(\beta)\subseteq$ $\Omega(U)$, and hence $T$ induces a 
surjective map $\Hom_A(V,\Omega(U))\to$ $\Hombar_A(U,V)^\vee$. 
The kernel of this map consists of all homomorphisms in the image 
of the map $\Hom_A(V,P_1)\to$ $\Hom_A(V,P_0)$, which by the above 
is $\Hom_A^{pr}(V,\Omega(U))$. This yields an isomorphism as
stated in \ref{TateHomdual}. A diagram chase shows that this
isomorphism `commutes' with isomorphisms obtained from applying
the equivalence $\Sigma$; that is, the following diagram is
commutative:
\begin{Equation} \label{CY-1}
$$\xymatrix{ \Hombar_A(V,\Omega(U)) \ar[rr] \ar[d] & & 
\Hombar_A(U,V)^\vee \ar[d] \\
\Hombar_A(\Sigma(V), U) \ar[rr] & & 
\Hombar_A(\Sigma(U),\Sigma(V))^\vee } $$
\end{Equation}
where the horizontal isomorphisms are the Tate duality isomorphisms
from \ref{TateHomdual}, where the vertical isomorphisms are
induced by $\Sigma$, and where we have identified $\Sigma(\Omega(U))=$ 
$U=$ $\Omega(\Sigma(U))$ in the lower left corner of this diagram.

\medskip
Using the naturality of Tate duality, we obtain a compatibility of
Tate duality and Yoneda products as follows.
Let $U$, $V$, $W$ be finitely
generated $A$-modules, let $m$, $n$ be integers, and let
$\zeta\in$ $\hatExt^{m+n-1}_A(W,U)$, $\eta\in$ $\hatExt_A^{-m}(V,W)$,
and $\tau\in$ $\hatExt_A^{-n}(U,V)$. Denote by $\zeta\eta=$
$\Sigma^{-m}(\zeta)\circ\eta$  and $\eta\tau=$ $\Sigma^{-n}(\eta)\circ\tau$
the Yoneda products in $\hatExt_A^{n-1}(V,U)$ and
$\hatExt_A^{-m-n}(U,W)$, respectively. Denote by $T(\zeta)$ and $T(\zeta\eta)$
the images of $\zeta$ and of $\zeta\eta$ in $\Ext_A^{-m-n}(U,W)^\vee$ and 
$\Ext_A^{-n}(U,V)^\vee$, respectively, under the appropriate versions of the 
Tate duality isomorphism  \ref{Tatedualitysymm}. We have
\begin{Equation} \label{TateYoneda}
$$T(\zeta\eta)(\tau)= T(\zeta)(\eta\tau)\ ,$$
\end{Equation}
or equivalently,
\begin{Equation} \label{TateYonedanondeg}
$$\langle \zeta\eta, \tau\rangle = \langle \zeta , \eta\tau\rangle\ .$$
\end{Equation}
To see this, consider the diagram
$$\xymatrix{ \hatExt^{n+m-1}_A(W,U) \ar[rr]^{\cong} \ar[d]_{\cong} & &
             \hatExt^{-m-n}_A(U,W)^\vee \ar[d]^{\cong} \\
 \hatExt^{n-1}_A(\Omega^{m}(W),U) \ar[rr]^{\cong} \ar[d]_{(\eta,U)} & & 
 \hatExt_A^{-n}(U,\Omega^{m}(W))^\vee \ar[d]^{(U,\eta)^\vee} \\
 \hatExt_A^{n-1}(V,U) \ar[rr]_{\cong} & & \hatExt_A^{-n}(U,V)^\vee\ }$$
where the horizontal maps are the Tate duality isomorphisms, where the
vertical isomorphisms are induced by $\Omega^{m}$, and where the
two remaining vertical maps are induced by (pre-) composing with $\eta$. 
The upper square is commutative by \ref{CY-1}. The lower square is
commutative by the naturality of Tate duality.
The image of $\zeta$ under the two left vertical maps is
$\zeta\eta$, and the image of $T(\zeta)$ under the right two vertical
maps is the map $\tau\mapsto$ $T(\zeta)(\eta\tau)$. By the
commutativity of this diagram this map is equal to $T(\zeta\eta)$, whence
\ref{TateYoneda} and \ref{TateYonedanondeg}.
Tate duality is dual to its own inverse: combining two Tate duality 
isomorphisms
\begin{Equation} \label{Tatedoubledual}
$$\hatExt^{n-1}_A(V,U)\cong \hatExt_A^{-n}(U,V)^\vee\cong
\hatExt^{n-1}_A(V,U)^{\vee\vee}$$
\end{Equation}
yields the canonical double duality isomorphism, or equivalently, for
$\zeta\in$ $\hatExt_A^{n-1}(V,U)$ and $\eta\in$ $\hatExt^{-n}_A(U,V)$ 
we have
\begin{Equation} \label{Tatedoubledualform}
$$\langle \zeta, \eta\rangle = \langle\eta, \zeta\rangle\ .$$
\end{Equation}

This can be seen by observing that if $P$, $Q$ are two finitely
generated projective $A$-modules, then the composition of the
two consecutive isomorphisms
$\Hom_A(P,Q)\cong$ $\Hom_A(Q,P)^\vee\cong$ $\Hom_A(P,Q)^{\vee\vee}$
obtained from \ref{VPdual} is equal to the canonical double duality
isomorphism.   

\medskip
The opposite algebra $A^0$ of $A$ is again symmetric, with the
same symmetrising form $s$. Thus the algebra $A\tenk A^{0}$ 
is symmetric as well. The Tate-Hochschild 
cohomology of $A$ is defined by $\hatHH^n(A)=$ 
$\Hombar_{A\tenk A^{0}}(A,\Sigma^n(A))$, for any integer $n$, 
where here $\Sigma=$ $\Sigma_{A\tenk A^{0}}$. Tate duality
for Tate-Hochschild cohomology is thus a canonical isomorphism
\begin{Equation} \label{TateHochschildduality}
$$(\hatHH^{-n}(A))^\vee\cong \hatHH^{n-1}(A)\ ,$$ 
\end{Equation}
for any integer $n$.

\section{On adjunction for symmetric algebras}
\label{symmetricsection}

Let $A$, $B$ be symmetric algebras with symmetrising forms
$s\in$ $A^\vee$ and $t\in$ $B^\vee$. The field $k$ is trivially 
a symmetric $k$-algebra, and it is always understood being endowed 
with $\Id_k$ as symmetrising form. Let $M$ be an $A$-$B$-bimodule 
which is finitely generated projective as a left $A$-module 
and as a right $B$-module. It is well-known that the functors 
$M\tenB-$ and $M^\vee\tenA-$ are left and right adjoint to 
each other; see e.g. Brou\'e \cite{Broue1} or \cite[\S 6]{Broue2}. 
We will need the explicit description from \cite{Broue2} of this 
adjunction in order to identify certain isomorphisms as special cases 
of this adjunction. We briefly sketch this, but leave detailed
verifications to the reader. The starting point is the standard 
tensor-Hom-adjunction; this is the adjoint pair
of functors $(M\tenB-, \Hom_A(M,-))$ between $\Mod(A)$ and $\Mod(B)$, 
with the natural isomorphism
$\Hom_A(M\tenB V,U)\cong$ $\Hom_B(V,\Hom_A(M,U))$ sending
$\varphi\in$ $\Hom_A(M\tenB V,U)$ to the map 
$v\mapsto (m\mapsto \varphi(m\ten v))$, where $U$ is an $A$-module,
$V$ a $B$-module, $v\in$ $V$ and $m\in$ $M$. 
The unit of this adjunction is represented by the $B$-$B$-bimodule
homomorphism $B\to$ $\Hom_A(M,M)$ sending $1_B$ to $\Id_M$; the
counit of this adjunction is represented by the $A$-$A$-bimodule
homomorphism $M\tenB \Hom_A(M,A)\to$ $A$ sending $m\ten\alpha$ to
$\alpha(m)$, where $m\in$ $M$ and $\alpha\in$ $\Hom_A(M,A)$.

\medskip
Since $M$ is finitely generated projective as a left $A$-module, 
the canonical map $\Hom_A(M,A)\tenA U\to$ $\Hom_A(M,U)$
sending $\alpha\ten u$ to the map $m\mapsto \alpha(m)u$
is an isomorphism, where $\alpha\in$ $\Hom_A(M,A)$, $u\in$ $U$,
and $m\in$ $M$. Under this isomorphism applied with $U=$ $M$, the
preimage of $\Id_M$ is an expression of the form 
$\sum_{i\in I}\ \alpha_i\ten m_i$, where $I$ is a finite indexing set,
$\alpha_i\in$ $\Hom_A(M,A)$ and $m_i\in$ $M$ such that 
$\sum_{i\in I}\ \alpha_i(m')m_i=$ $m'$ for all $m'\in$ $M$.

\medskip
Since $A$ is symmetric, the map sending $\alpha$ to $s\circ\alpha$ 
is an isomorphism of $B$-$A$-bimodules $\Hom_A(M,A)$ $\cong$ $M^\vee$. 
Similarly, the map sending $\beta\in$ $\Hom_{B^0}(M,B)$ to 
$t\circ\beta$ is an isomorphism of $B$-$A$-bimodules 
$\Hom_{B^0}(M)\cong$ $M^\vee$. Combined with the standard 
adjunction these isomorphisms yield an adjunction
\begin{Equation} \label{MMveeadj}
$$\Hom_A(M\tenB V,U) \cong \Hom_B(V,M^\vee\tenA U)$$
\end{Equation}
sending $\lambda_{\gamma,u}$ to the map $v\mapsto$ $s\circ\gamma_v\ten u$,
where $\gamma\in$ $\Hom_A(M,A)$, $u\in$ $U$, where
$\lambda_{\gamma,u}\in$ $\Hom_A(M\tenB V,U)$ is defined by
$\lambda_{\gamma,u}(m\ten v)=$ $\gamma(m\ten v)u$, and where 
$\gamma_v\in$ $\Hom_A(M,A)$ is defined by
$\gamma_v(m)=$ $\gamma(m\ten v)$, for all $m\in$ $M$, $v\in$ $V$.
The unit and counit of this adjunction are represented by
bimodule homomorphisms 
\begin{Equation} \label{MMveeadjunit}
$$\epsilon_M : B \to M^\vee\tenA M\ ,\ \ 
1_B \mapsto \sum_{i\in I}\ (s\circ \alpha_i) \ten m_i\ ,$$
$$\eta_M : M\tenB M^\vee\to A\ ,\ \ m\ten (s\circ\alpha)\mapsto
\alpha(m)\ ,$$
\end{Equation}
where $I$, $\alpha_i$, $m_i$ are as before.
Similarly, we have an adjunction isomorphism
\begin{Equation} \label{MveeMadj}
$$\Hom_B(M^\vee\tenA U,V) \cong \Hom_A(U,M\tenB V)$$
\end{Equation}
obtained from \ref{MMveeadj} by
exchanging the roles of $A$ and $B$ and using $M^\vee$
instead of $M$ together with the canonical double duality 
$M^{\vee\vee}\cong$ $M$. The adjunction unit and counit
of this adjunction are  represented by bimodule homomorphisms
\begin{Equation} \label{MveeMadjunit}
$$\epsilon_{M^\vee} : A \to M\tenB M^\vee\ ,\ \ 
1_A \mapsto \sum_{j\in J}\ m_j\ten (t\circ \beta_j)\ ,$$
$$\eta_{M^\vee} : M^\vee\tenA M\to B\ ,\ \ (t\circ\beta)\ten m \mapsto
\beta(m)\ ,$$
\end{Equation}
where $J$ is a finite indexing set, $\beta_j\in$ $\Hom_{B^0}(M,B)$,
$m_j\in$ $M$, such that $\sum_{j\in J}\ m_j\beta_j(m')=$ $m'$ for
all $m'\in$ $M$, where $m\in$ $M$ and $\beta\in$ $\Hom_{B^0}(M,B)$.
Note the slight abuse of notation: for the maps $\epsilon_{M^\vee}$
and $\eta_{M^\vee}$ in \ref{MveeMadjunit} to coincide with those
obtained from \ref{MMveeadjunit} applied to $M^\vee$ instead of $M$ 
we need to identify $M$ and $M^{\vee\vee}$. One could avoid this by
replacing the pair of bimodules $(M,M^\vee)$ by a pair of bimodules 
$(M,N)$ which are dual to each other through a fixed choice of a 
nondegenerate bilinear map $M \times N \to$ $k$; this is the point 
of view taken in \cite{Broue2}.

\medskip
The adjunction units and counits of the adjunctions \ref{MMveeadj}
and \ref{MveeMadj} are also the units and counits of the
corresponding adjunctions for right modules. More precisely,
the maps $\epsilon_M$ and $\eta_M$ represent the unit and counit of
the adjoint pair $(-\tenB M^\vee, -\tenA M)$, and the maps
$\epsilon_{M^\vee}$ and $\eta_{M^\vee}$ represent the unit and
counit of the adjoint pair $(-\tenA M, -\tenB M^\vee)$.

\medskip
Duality is compatible with tensor products:
if $N$ is a $B$-$C$-bimodule, where $C$ is another symmetric $k$-algebra,
such that $N$ is finitely generated projective as a left $B$-module
and as a right $C$-module, then we have a natural isomorphism
of $C$-$A$-bimodules
\begin{Equation} \label{dualtensor}
$$N^\vee\tenB M^\vee\cong (M\tenB N)^\vee$$ 
\end{Equation}
sending $(t\circ \beta)\ten\mu$ to the map $m\ten n\mapsto \mu(m\beta(n))$,
where $\mu\in$ $M^\vee$, $\beta\in$ $\Hom_B(N,B)$ (hence $t\circ\beta\in$
$N^\vee$), and where $m\in$ $M$, $n\in$ $N$. This is
obtained as the composition of the natural isomorphisms
$$N^\vee\tenB M^\vee\cong\Hom_B(N,M^\vee)\cong (M\tenB N)^\vee$$
where the second isomorphism is the standard adjunction
with $k$ instead of $A$. Using this isomorphism
(applied to $C=$ $A$ and $N=$ $M^\vee$) we obtain that the adjunction
units and counits from the left and right adjunction of the functors
$M^\vee\tenA-$ and $M\tenB-$ are dual to each other. More
precisely, we have a commutative diagram
\begin{Equation} \label{unitdual}
$$\xymatrix{ A \ar[d] \ar[rr]^(.45){\epsilon_{M^\vee}} & & 
M\tenB M^\vee \ar[d] \\
A^\vee \ar[rr]_(.45){(\eta_M)^\vee} & & (M\tenB M^\vee)^\vee}$$
\end{Equation}
where the left vertical isomorphism is induced by $s$ (sending
$a\in$ $A$ to the linear map $a\cdot s$ defined by $(a\cdot s)(a')=$
$s(aa')$ for all $a\in$ $A$) and where the right vertical
isomorphism combines the isomorphism $(M\tenB M^\vee)^\vee\cong$
$M^{\vee\vee}\tenB M^\vee$ from (\ref{dualtensor}) and the
canonical isomorphism $M^{\vee\vee}\cong$ $M$. The commutativity
is verified by chasing $1_A$ through this diagram. Similarly,
we have a commutative diagram
\begin{Equation} \label{counitdual}
$$\xymatrix{ M^\vee\tenA M \ar[d] \ar[rr]^(.55){\eta_{M^\vee}} & & 
B \ar[d] \\
(M^\vee\tenA M)^\vee \ar[rr]_(.55){(\epsilon_M)^\vee} & & B^\vee}$$
\end{Equation}
where the right vertical isomorphism is induced by $t$ and the
left vertical isomorphism is (\ref{dualtensor})
combined with $M^{\vee\vee}\cong$ $M$ as before.

\medskip
For later reference, 
we mention the special case of the adjunction isomorphism
\ref{MMveeadj} with the algebras $k$, $A$ instead
of $A$, $B$, respectively, the $k$-$A$-bimodule $A$ instead
of $M$, and $k$ and $U$ instead of $U$ and $V$, respectively.
This yields a natural isomorphism
\begin{Equation} \label{UkUAvee}
$$\tau : \Hom_k(U,k)\cong \Hom_A(U,A^\vee)$$
\end{Equation}
sending $\gamma\in$ $\Hom_k(U,k)$ to the map 
$u\mapsto (a\mapsto \gamma(au))$, where $u\in$ $U$ and
$a\in$ $A$. Similarly, the special case of the 
adjunction \ref{MveeMadj} with the algebras $k$, $A$
instead of $A$, $B$, respectively, the $A$-$k$-bimodule $A$
instead of $M$, and the modules $U$, $k$ instead of $V$, $U$, 
respectively, yields a natural isomorphism
\begin{Equation} \label{AveeUkU}
$$\beta : \Hom_A(A^\vee,U)\cong \Hom_k(k,U)$$
\end{Equation}
sending $\varphi\in$ $\Hom_A(A^\vee,U)$ to the unique
linear map sending $1\in$ $k$ to $\varphi(s)$.

\section{Tate duality and adjunction}
\label{Tateadjsection}

As in the preceding section,
let $A$, $B$ be symmetric $k$-algebras, with a fixed choice of
symmetrising forms $s\in$ $A^\vee$ and $t\in$ $B^\vee$. Let $M$
be an $A$-$B$-bimodule which is finitely generated projective
as a left $A$-module and as a right $B$-module. 
Tate duality is induced by the isomorphisms in 
\ref{VPdual}, and so we need to show that these are
compatible with the adjunctions from \S \ref{symmetricsection}.

\begin{Lemma} \label{TateAUadj}
Let $U$ be a finitely generated $A$-module.
We have a commutative diagram of $k$-linear isomorphisms
$$\xymatrix{ 
\Hom_A(U,A) \ar[r]^{\sigma} \ar[d]  
& \Hom_k(U,k) \ar[d] \ar[r]^{\tau} 
& \Hom_A(U,A^\vee)\ar[d]\\
\Hom_A(A,U)^\vee \ar[r]_{\alpha^\vee} 
&  \Hom_k(k,U)^\vee \ar[r]_{\beta^\vee} 
& \Hom_A(A^\vee,U)^\vee } $$
where the map $\sigma$ is induced by composing with the 
symmetrising form $s$, the map $\tau$ is the adjunction
isomorphism,
the map $\alpha^\vee$ is the dual of the canonical
isomorphism $\alpha : \Hom_k(k,U)\cong$ $U\cong$ $\Hom_A(A,U)$,
the map $\beta^\vee$ is the dual of the adjunction isomorphism,
and where the vertical isomorphisms are given by
\ref{VPdual}, with $k$ considered as symmetric algebra
having $\Id_k$ as symmetrising form.
\end{Lemma}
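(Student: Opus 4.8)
The statement asserts that the square $\Hom_A(U,A) \to \Hom_A(U,A^\vee)$ (top row) versus $\Hom_A(A,U)^\vee \to \Hom_A(A^\vee,U)^\vee$ (bottom row) commutes, where the outer vertical maps are the instances of \ref{VPdual} for the projective modules $P = A$ (as $A$-module) on the left and $P = A^\vee$ on the right, and the middle vertical map is the instance of \ref{VPdual} for $A = k$, namely the canonical isomorphism $\Hom_k(U,k) = U^\vee \cong \Hom_k(k,U)^\vee$. The cleanest approach is to check commutativity of the left square and the right square separately, and in each case to do so by a direct element chase: pick $\beta \in \Hom_A(U,A)$ (resp.\ $\varphi \in \Hom_A(U,A^\vee)$), compute both composites as explicit elements of $\Hom_k(k,U)^\vee$, and compare the values they take on a general element of $\Hom_k(k,U)$, i.e.\ on an evaluation-at-$1$ functional determined by a vector $u \in U$.

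First I would treat the left square. Here the left vertical map $\Hom_A(U,A) \to \Hom_A(A,U)^\vee$ is \ref{VPdual} with $P = A$, $V = U$: it sends $\beta \in \Hom_A(U,A)$ to the functional $\hat\beta$ on $\Hom_A(A,U) \cong U$ determined by $\hat\beta(\lambda_{\alpha,v}) = s(\alpha(\beta(v)))$ for $\alpha \in \Hom_A(A,A) = A$, $v \in U$; identifying $\Hom_A(A,U)$ with $U$ via $f \mapsto f(1_A)$, this reads $\hat\beta(u') = s((\beta(u'))(1_A)) \cdot(\text{something})$ — more precisely one unwinds that $\hat\beta$ corresponds under $\alpha^\vee$-transport to the functional $u' \mapsto s(\beta(u'))$, viewing $\beta(u') \in A$. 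On the other side, $\sigma$ sends $\beta$ to $s\circ\beta \in \Hom_k(U,k)$, and then the middle vertical map (the canonical iso $U^\vee \cong \Hom_k(k,U)^\vee$) sends $s\circ\beta$ to the functional on $\Hom_k(k,U) \cong U$ given by $u' \mapsto (s\circ\beta)(u') = s(\beta(u'))$. Finally $\alpha^\vee$ is dualising the canonical iso $\Hom_k(k,U) \cong U \cong \Hom_A(A,U)$, so it merely re-labels the domain. Matching these two functionals is then immediate. The point is that \ref{VPdual} for $P = A$ is, by the remark in the excerpt (``the map sending $\tau$ to $s\circ\tau$ is a natural isomorphism $\Hom_A(U,A) \cong U^\vee$''), literally built from $\sigma$ followed by the $A = k$ version of \ref{VPdual}, so the left square commutes essentially by the definition of the constructions involved.

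For the right square I would do the analogous chase, now using that the right vertical map is \ref{VPdual} with $P = A^\vee$. Take $\varphi \in \Hom_A(U,A^\vee)$; its image $\hat\varphi \in \Hom_A(A^\vee,U)^\vee$ is characterised by $\hat\varphi(\lambda_{\alpha,u}) = s_{?}(\alpha(\varphi(u')))$ — but here one must use the symmetrising form of the algebra $A$ evaluated appropriately, and crucially the map $\Hom_A(U,A^\vee) \to \Hom_k(U,k)$ hidden in the top row is $\tau^{-1}$, the inverse of the adjunction isomorphism \ref{UkUAvee}, which sends $\varphi$ to the functional $u' \mapsto (\varphi(u'))(1_A)$. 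Then the middle map sends that to the functional $g \mapsto g(1_A)$-flavoured object on $\Hom_k(k,U)$, and $\beta^\vee$ transports along the adjunction iso \ref{AveeUkU}, which sends $\psi \in \Hom_A(A^\vee,U)$ to $\psi(s) \in U$. So the composite bottom-then-right evaluated on $\psi$ should give $(\varphi(\,?\,))$ paired against $\psi(s)$; unwinding the definition of $\hat\varphi$ via $\lambda_{\alpha,u}$ with $P = A^\vee$ and comparing gives the match, again because $\tau$ and $\beta$ are mutually the adjunction isomorphisms \ref{UkUAvee}, \ref{AveeUkU} and $\beta^\vee$ is exactly their transpose.

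The main obstacle, and the only place where care is genuinely needed, is bookkeeping: \ref{VPdual} is itself assembled from several layers (the iso $P^\vee \tenA V \cong \Hom_A(P,V)$, dualising, the standard adjunction, and double duality $P^{\vee\vee} \cong P$), so writing down what $\hat\beta$ and $\hat\varphi$ actually are on the nose — and getting all the $s$'s, the $1_A$'s, and the double-duality identifications to land in the right slots — is fiddly. In particular for the right square one is applying \ref{VPdual} with the non-obvious projective $P = A^\vee$, and one must remember that $A^\vee \cong A$ as bimodules via $\Phi$, so that $\Hom_A(A^\vee,U) \cong \Hom_A(A,U) \cong U$ but the composite of these identifications involves $s$. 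I expect that once the two functionals in each square are written out as maps $U \to k$ with no remaining abbreviations, they are visibly equal, and the whole proof is a two-paragraph element chase of the kind the authors describe as ``formal verifications''. I would present it as: ``Both squares commute; we verify the left one, the right one being analogous,'' then chase a general $\beta$ and leave the right square to the reader, or spell out both if space permits.
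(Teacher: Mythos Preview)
Your plan is correct and, for the left square, is essentially identical to the paper's argument: both chase a general $\varphi\in\Hom_A(U,A)$, reduce the generators $\lambda_{\alpha,u}$ of $\Hom_A(A,U)$ to the form $\lambda_{\Id,u'}$, and observe that both routes produce the functional $u\mapsto s(\varphi(u))$.

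For the right square the approaches diverge. You propose a second element chase through the explicit formulas for $\tau$ and $\beta$ from \ref{UkUAvee} and \ref{AveeUkU}, and correctly anticipate that the bookkeeping with $P=A^\vee$ in \ref{VPdual} is fiddly. The paper acknowledges this direct verification is possible, but then gives a cleaner alternative: one checks that $\tau\circ\sigma$ and $\alpha\circ\beta$ are \emph{both} induced by the bimodule isomorphism $A\cong A^\vee$ sending $1_A$ to $s$, so the outer rectangle commutes simply by naturality of \ref{VPdual} in $P$. Since the left square already commutes and every map in sight is an isomorphism, the right square commutes automatically. This trick completely sidesteps the element chase you flagged as the main obstacle; it trades the detailed unwinding of $\hat\varphi$ on $\lambda_{\alpha,u}$ for a one-line naturality observation. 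Your approach still works and is what the paper calls the direct verification, but the naturality argument is what you would actually want to write down.
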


\begin{proof}
Let $\varphi\in$ $\Hom_A(U,A)$. The left vertical isomorphism
sends $\varphi$ to the map 
$\lambda_{\alpha,u}\mapsto s(\alpha(\varphi(u)))$,
where $u\in$ $U$, $\alpha\in$ $\Hom_A(A,A)$ and
$\lambda_{\alpha,u}(a)=$ $\alpha(a) u=$ $a\alpha(1)u$.
Thus $\lambda_{\alpha,u}=$ $\lambda_{\Id,u'}$, where
$\Id$ is the identity map on $A$ and $u'=$ $\alpha(1)u$.
It follows that the left vertical isomorphism sends
$\varphi$ to the unique map sending $\lambda_{\Id,u}$
to $s(\varphi(u))$. 
The upper horizontal isomorphism sends $\varphi$ to
$s\circ\varphi$. Similarly, the middle vertical isomorphism sends
$s\circ\varphi$ to the map sending $\lambda_{\Id_k,u}$ 
to $s(\varphi(u))$. This shows the commutativity of the left
square in the diagram. The commutativity of the right square
can be verified directly using the explicit descriptions of 
$\tau$ and $\beta$ from \ref{UkUAvee} and \ref{AveeUkU}.
Alternatively, it is easy to see that $\tau\circ\sigma$ and 
$\alpha\circ\beta$ are both induced by the isomorphism $A\cong$ 
$A^\vee$ sending $1_A$ to $s$. Thus the outer rectangle (that is, 
with the vertical arrow in the middle removed) is commutative by 
the naturality of the isomorphism \ref{VPdual}. Since
all involved maps are isomorphism, the commutativity of the
right square follows.
\end{proof}

\begin{Lemma} \label{Tateprojadj}
Let $P$ be a finitely generated projective $A$-module, and
let $V$ be a finitely generated $B$-module. Then $M^\vee\tenA P$ 
is  a finitely generated projective $B$-module, and we have
a commutative diagram of $k$-linear isomorphisms
$$\xymatrix{
\Hom_A(M\tenB V,P) \ar[rr] \ar[d]& & \Hom_B(V,M^\vee\tenA P) \ar[d]\\
\Hom_A(P, M\tenB V)^\vee \ar[rr] & & \Hom_B(M^\vee\tenA P, V)^\vee }$$
where the horizontal isomorphisms are given by the adjunction
isomorphisms, and where the vertical isomorphisms are from
\ref{VPdual}.
\end{Lemma}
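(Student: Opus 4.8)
The plan is to verify both squares by chasing a single well-chosen element. First I would record that $M^\vee\tenA P$ is finitely generated projective as a right $B$-module: since $M^\vee$ is finitely generated projective as a right $B$-module and as a left $A$-module, and $P$ is a direct summand of a finite sum of copies of $A$, the module $M^\vee\tenA P$ is a direct summand of a finite sum of copies of $M^\vee\tenA A\cong M^\vee$, hence finitely generated projective over $B$. (One also uses that $M$, being finitely generated projective over $A$ on the left, has $M^\vee$ finitely generated projective over $A$ on the right, so the tensor product makes sense and behaves well.) This legitimises applying the description \ref{VPdual} with the symmetric algebra $B$ and the projective $B$-module $M^\vee\tenA P$ on the bottom right, and with the symmetric algebra $A$ and the projective $A$-module $P$ on the bottom left.

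Next I would fix notation for the four maps. The top horizontal map is the adjunction isomorphism \ref{MMveeadj}, which sends $\lambda_{\gamma,p}\in\Hom_A(M\tenB V,P)$ (with $\gamma\in\Hom_A(M,A)$, $p\in P$, and $\lambda_{\gamma,p}(m\ten v)=\gamma(m\ten v)p$) to the map $v\mapsto (s\circ\gamma_v)\ten p$, where $\gamma_v(m)=\gamma(m\ten v)$. Since $P$ is projective, every element of $\Hom_A(M\tenB V,P)$ is a finite sum of such $\lambda_{\gamma,p}$'s, so it suffices to chase one summand. The bottom horizontal map is the dual of the adjunction isomorphism \ref{MveeMadj} (applied to $\Hom_B(M^\vee\tenA P,V)\cong\Hom_A(P,M\tenB V)$), and the two vertical maps are the instances of \ref{VPdual} for $A$ and for $B$ respectively. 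The left vertical map sends $\varphi\in\Hom_A(M\tenB V,P)$ to the functional on $\Hom_A(P,M\tenB V)$ that sends $\lambda_{\alpha,w}$ (with $\alpha\in\Hom_A(P,A)$, $w\in M\tenB V$) to $s(\alpha(\varphi(w)))$; the right vertical map is the analogous functional on $\Hom_B(M^\vee\tenA P,V)$ built from the symmetrising form $t$.

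Then I would carry out the chase. Starting from $\lambda_{\gamma,p}$, going right then down gives the functional on $\Hom_B(M^\vee\tenA P,V)$ whose value on $\lambda_{\beta,v'}$ (with $\beta\in\Hom_{B^0}(M^\vee\tenA P,B)$, $v'\in V$) is computed via \ref{VPdual} for $B$, using that the top-right corner's image of $\lambda_{\gamma,p}$ is $v\mapsto (s\circ\gamma_v)\ten p$; going down then right, one transports $\lambda_{\gamma,p}$ to the functional on $\Hom_A(P,M\tenB V)$ via \ref{VPdual} for $A$, then precomposes with the adjunction \ref{MveeMadj} to land on $\Hom_B(M^\vee\tenA P,V)^\vee$. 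Both routes should produce the scalar obtained by pairing $p$ against $\beta$ and $v'$ against $v$ through $s$ and $t$; concretely this will come down to the identity $s(\alpha(\gamma_v(m)\cdots)) = t(\beta(\cdots))$ after unwinding the adjunction formulas of \S \ref{symmetricsection}, i.e. it reduces to the compatibility of the two symmetrising forms with the tensor-Hom adjunction, which is exactly what \ref{MMveeadj}–\ref{MveeMadjunit} encode. The bookkeeping of which copy of $M^{\vee\vee}\cong M$ is being used — and keeping the roles of $\gamma_v$ versus $\gamma$ straight — will be the main source of friction; as in Lemma \ref{TateAUadj}, a cleaner alternative is to observe that the composite "top map followed by right vertical map" and the composite "left vertical map followed by bottom map" are both natural transformations built entirely out of \ref{VPdual}, the standard tensor-Hom adjunction, and the isomorphisms $\Hom_A(M,A)\cong M^\vee$, $\Hom_{B^0}(M,B)\cong M^\vee$ induced by $s$ and $t$, so they agree by the naturality of \ref{VPdual} once they are checked to agree on the distinguished element (the preimage of $\Id_M$, equivalently the family $\sum_i\alpha_i\ten m_i$). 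I expect the naturality argument to be the efficient route and the explicit chase to be available as a fallback; either way, no genuine obstacle arises beyond careful tracking of the dualities.
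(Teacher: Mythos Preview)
Your plan is in the right spirit but takes a harder road than the paper, and your ``naturality'' alternative is not coherent as stated. The paper does not chase a generic element at all: it first uses naturality in $P$ to reduce to $P=A$, and then factors each horizontal adjunction through $k$-linear duality, producing a three-column diagram whose middle column is $\Hom_k(M\tenB V,k)$ over $\Hom_k(k,M\tenB V)^\vee$. The left square of that diagram is then literally the left square of Lemma~\ref{TateAUadj} (with $U=M\tenB V$), and the right square is handled by naturality in $M$, reducing to $M=B$, which is the right square of Lemma~\ref{TateAUadj}. So the entire argument is a two-step reduction that offloads all the actual computation to the preceding lemma; nothing new is verified here.

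By contrast, your explicit element chase, if carried through, would essentially re-prove Lemma~\ref{TateAUadj} inside a more complicated setting; it can be made to work, but you yourself flag the bookkeeping as the main source of friction, and you stop short of writing down the scalar identity that needs to hold. More seriously, your proposed shortcut --- ``check agreement on the preimage of $\Id_M$'' --- does not make sense: that element lives in $\Hom_A(M,A)\tenA M$, not in any corner of the diagram, and there is no single distinguished element on which one can test a natural transformation between these functors of $(P,V)$. The correct naturality move is the one the paper makes: vary $P$ (down to $A$) and $M$ (down to $B$), rather than test at a single element.
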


\begin{proof}
The maps in this diagram are natural in $P$. Thus it suffices
to show the commutativity for $P=$ $A$. More explicitly,
we will show the commutativity of the following
diagram of linear isomorphisms: 

$$\xymatrix{
\Hom_A(M\tenB V, A) \ar[r]\ar[d] & \Hom_k(M\tenB V,k)\ar[r] \ar[d]
& \Hom_B(V,M^\vee) \ar[d] \\
\Hom_A(A,M\tenB V)^\vee\ar[r] & \Hom_k(k,M\tenB V)^\vee\ar[r]
&\Hom_B(M^\vee, V)^\vee }$$
The vertical arrows are isomorphisms from \ref{VPdual}.
The upper two horizontal maps are adjunction isomorphisms, and
their composition is the upper isomorphism of the diagram
in the statement (with $P=$ $A$). Similarly, the lower two 
horizontal maps are dual to adjunction isomorphisms, and their 
composition is the lower horizontal isomorphism of the diagram 
in the statement (with $P=$ $A$).
The commutativity of the left square in this diagram follows 
from that of the left square in Lemma \ref{TateAUadj}. For the 
commutativity it suffices, by naturality, to show this
for $M=$ $B$, which is a special case of the right square in
Lemma \ref{TateAUadj}. 
\end{proof}

\begin{Proposition} \label{Tateadj}
Let $U$ be a finitely generated $A$-module, and
let $V$ be a finitely generated $B$-module. 
We have a commutative diagram of $k$-linear isomorphisms
$$\xymatrix{
\Hombar_A(M\tenB V,\Omega(U)) \ar[rr] \ar[d]& 
& \Hombar_B(V,M^\vee\tenA \Omega(U)) \ar[d]\\
\Hombar_A(U, M\tenB V)^\vee \ar[rr] & & \Hombar_B(M^\vee\tenA U, V)^\vee }$$
where the horizontal isomorphisms are given by the adjunction
isomorphisms, the vertical isomorphisms are the
Tate duality isomorphisms from \ref{TateHomdual}, and
where we identify $\Omega(M^\vee\tenA V)=$ $M^\vee\tenA\Omega(U)$,
with $\Omega$ denoting either $\Omega_A$ or $\Omega_B$.
\end{Proposition}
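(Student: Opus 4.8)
The plan is to compute both vertical (Tate duality) isomorphisms of the diagram from one and the same projective presentation of $U$, using the explicit description of Tate duality recorded in diagram \ref{Tatedualityproof}, and then to read off the commutativity by inserting the adjunction isomorphisms \ref{MMveeadj} and \ref{MveeMadj} into that diagram. First I would fix an exact sequence $P_1 \to P_0 \to U \to 0$ of $A$-modules with $P_0$, $P_1$ finitely generated projective, with maps $\delta : P_1 \to P_0$ and $\pi : P_0 \to U$, so that $\Omega_A(U) = \Im(\delta) = \ker(\pi)$. Since $M$ is finitely generated projective as a right $B$-module, $M^\vee$ is finitely generated projective as a left $B$-module; and since $M$ is finitely generated projective as a left $A$-module, $M^\vee$ is finitely generated projective, hence flat, as a right $A$-module. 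Therefore applying $M^\vee\tenA-$ to the above presentation yields an exact sequence $M^\vee\tenA P_1 \to M^\vee\tenA P_0 \to M^\vee\tenA U \to 0$ of $B$-modules with $M^\vee\tenA P_0$ and $M^\vee\tenA P_1$ finitely generated projective; this is the presentation I would use to compute Tate duality on the $B$-side, and it justifies the identification $\Omega_B(M^\vee\tenA U) = M^\vee\tenA\Omega_A(U)$ (which is canonical, as $\Omega$ is independent of the chosen presentation up to unique isomorphism).

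Next I would write down the two instances of diagram \ref{Tatedualityproof}: the first for the algebra $A$, the module $U$ with the presentation $(P_1, P_0)$, and with $M\tenB V$ playing the role of the module denoted $V$ there; the second for the algebra $B$, the module $M^\vee\tenA U$ with the presentation $(M^\vee\tenA P_1, M^\vee\tenA P_0)$, and with $V$ playing the role of the module denoted $V$ there. Tracing these as in the discussion following \ref{Tatedualityproof}, the map $T$ computing the left vertical arrow of the Proposition factors as
$$\Hom_A(M\tenB V, \Omega(U)) \hookrightarrow \Hom_A(M\tenB V, P_0) \cong \Hom_A(P_0, M\tenB V)^\vee \twoheadrightarrow \Hom_A(U, M\tenB V)^\vee,$$
where the first map is induced by the inclusion $\Omega(U)\hookrightarrow P_0$, the middle isomorphism is \ref{VPdual} for $P_0$, and the last map is the $k$-dual of precomposition with $\pi$; and the map $T'$ computing the right vertical arrow factors in the same way with $A$ replaced by $B$, $P_0$ by $M^\vee\tenA P_0$, $U$ by $M^\vee\tenA U$, $\pi$ by $M^\vee\tenA\pi$, and $M\tenB V$ by $V$. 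The modules $P_1$ and $M^\vee\tenA P_1$ enter only to identify the kernels of $T$ and $T'$ with the spaces of maps factoring through a projective, that is, to pass from $T$, $T'$ to the Tate duality isomorphisms \ref{TateHomdual}.

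Now I would connect these two factorisations node by node: at the two terms on the source side I use \ref{MMveeadj}, that is, $\Hom_A(M\tenB V, -) \cong \Hom_B(V, M^\vee\tenA -)$ evaluated at $\Omega(U)$ and at $P_0$; at the two terms on the dual-of-target side I use the $k$-dual of \ref{MveeMadj}, that is, $\Hom_A(-, M\tenB V)^\vee \cong \Hom_B(M^\vee\tenA -, V)^\vee$ evaluated at $U$ and at $P_0$. The three resulting squares commute: the one relating the two inclusions commutes by naturality of \ref{MMveeadj} applied to $\Omega(U)\hookrightarrow P_0$ (exactness of $M^\vee\tenA-$ carries this to the inclusion $M^\vee\tenA\Omega(U)\hookrightarrow M^\vee\tenA P_0$); the one relating the two copies of \ref{VPdual} is precisely Lemma \ref{Tateprojadj} with $P = P_0$; and the one relating the two dualised precomposition maps is obtained by applying $(-)^\vee$ to the naturality of \ref{MveeMadj} for $\pi : P_0 \to U$. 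Composing the three, $T'$ is the adjunction transform of $T$, which is the commutative square of the Proposition at the level of $\Hom$-spaces. Passing to the quotients $\Hombar$ is harmless, since $M\tenB-$ and $M^\vee\tenA-$ preserve projectivity, so the adjunction isomorphisms restrict to isomorphisms between the subspaces of homomorphisms factoring through a projective, and since $T$, $T'$ induce the Tate duality isomorphisms \ref{TateHomdual} by construction.

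I expect the only real difficulty to be organisational: keeping track of which of the two adjunctions \ref{MMveeadj}, \ref{MveeMadj}, and in which variance, belongs to each column of \ref{Tatedualityproof}, and making sure that $M^\vee\tenA-$ applied to the chosen presentation of $U$ computes $\Omega_B(M^\vee\tenA U)$ and the map $T'$ compatibly. Once this bookkeeping is in place, every individual square is either an instance of the naturality of an adjunction isomorphism or a direct appeal to Lemma \ref{Tateprojadj}, so no genuinely new computation is needed.
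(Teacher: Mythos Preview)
Your proposal is correct and follows essentially the same route as the paper: fix a projective presentation of $U$, push it through $M^\vee\tenA-$ to get a presentation of $M^\vee\tenA U$, write down the two instances of diagram \ref{Tatedualityproof}, and link them via the adjunction isomorphisms, with Lemma \ref{Tateprojadj} supplying the compatibility at the projective level. The only organisational difference is that the paper invokes Lemma \ref{Tateprojadj} twice (at $P_0$ and at $P_1$), using the $P_1$-row to match the kernels of $T$ and $T'$, whereas you invoke it once (at $P_0$) and then pass to $\Hombar$ by the separate observation that the adjunctions preserve the subspaces $\Hom^{pr}$; both are valid and amount to the same thing.
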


\begin{proof}
Let 
$$\xymatrix{ P_1 \ar[r]^{\delta} & P_0 \ar[r]^{\pi} & U \ar[r] & 0}$$
be an exact sequence of $A$-modules, with $P_0=$ $P_U$ and $P_1=$ 
$P_{\Omega(U)}$ projective, so that $\ker(\pi)=$ $\Im(\delta)=$ 
$\Omega(U)$. Since $M^\vee$ is finitely generated as a right
$A$-module, the sequence of $B$-modules
$$\xymatrix{ M^\vee\tenA P_1 \ar[r]^{\Id\ten \delta} 
& M^\vee\tenA P_0 \ar[r]^{\Id\ten \pi} & M^\vee\tenA U \ar[r] & 0}$$
is exact. Since $M^\vee$ is also finitely generated as a left
$B$-module, it follows that the $B$-modules $M^\vee\tenA P_1$
and $M^\vee\tenA P_0$ are finitely generated projective.
In particular, we may identify $\Omega(M^\vee\tenA U)=$
$\ker(\Id\ten\pi) =$ $M^\vee\tenA\Omega(U)$.
Combining the commutative diagram \ref{Tatedualityproof}, used
twice (with $A$, $U$, $M\tenB V$ and with $B$, $M^\vee\tenA U$, $V$,
respectively), with the commutative square from \ref{Tateprojadj}, also
used twice (with $P_1$ and $P_0$ instead of $P$), yields the result.
\end{proof}

For the proof of Theorem \ref{transferTatedual} we will need the
following bimodule version of \ref{Tateadj}.

\begin{Corollary} \label{bimoduleTateadj}
Let $C$ be a symmetric $k$-algebra with a fixed choice of
a symmetrising form, $U$ a finitely generated
$A\tenk C^0$-module, and let $V$ be a finitely generated
$B\tenk C^0$-module. We have a commutative diagram of
$k$-linear isomorphisms
$$\xymatrix{
\Hombar_{A\tenk C^0}(M\tenB V,\Omega(U)) \ar[rr] \ar[d]& 
& \Hombar_{B\tenk C^0}(V,M^\vee\tenA \Omega(U)) \ar[d]\\
\Hombar_{A\tenk C^0}(U, M\tenB V)^\vee \ar[rr] & & 
\Hombar_{B\tenk C^0}(M^\vee\tenA U, V)^\vee }$$
where the horizontal isomorphisms are the canonical adjunction
isomorphisms, the vertical isomorphisms are the Tate duality
isomorphisms from \ref{TateHomdual}, and where we identify
$\Omega(M^\vee\tenA U)=$ $M^\vee\tenA \Omega(U)$, 
with $\Omega$ denoting either $\Omega_{B\tenk C^0}$ or 
$\Omega_{A\tenk C^0}$.
\end{Corollary}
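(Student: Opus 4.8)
The plan is to reduce Corollary~\ref{bimoduleTateadj} to Proposition~\ref{Tateadj} by viewing $A\tenk C^0$, $B\tenk C^0$, and $C$ itself as the relevant symmetric algebras and by viewing $M\tenk C^0$ (equivalently, $M$ with the trivial right $C$-action made explicit) as a bimodule between them. Concretely, set $\tilde A=$ $A\tenk C^0$ and $\tilde B=$ $B\tenk C^0$; these are symmetric with symmetrising forms $s\tenk s_C$ and $t\tenk s_C$, where $s_C$ is the chosen symmetrising form for $C$. The bimodule $\tilde M=$ $M\tenk C$, regarded as an $\tilde A$-$\tilde B$-bimodule in the obvious way (with $C$ acting by right multiplication on the $C$-factor and $C^0$ acting on the left on that factor), is finitely generated projective as a left $\tilde A$-module and as a right $\tilde B$-module, since $M$ has these properties over $A$ and $B$ and $C$ is free of rank one over itself. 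Moreover $\tilde M\ten_{\tilde B}-$ is naturally isomorphic, as a functor from $\mod(\tilde B)=$ $\mod(B\tenk C^0)$ to $\mod(\tilde A)=$ $\mod(A\tenk C^0)$, to the functor $M\tenB-$ appearing in the statement, and likewise $\tilde M^\vee\ten_{\tilde A}-\cong$ $M^\vee\tenA-$ using the identification $\tilde M^\vee\cong$ $M^\vee\tenk C^\vee\cong$ $M^\vee\tenk C$.

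The key steps, in order, are as follows. First I would record that the functor $M\tenB-$ on bimodule categories is precisely the functor $\tilde M\ten_{\tilde B}-$ under the above identifications, and that $\Omega_{\tilde A}$ and $\Omega_{\tilde B}$ computed in these bimodule categories agree with the operators denoted $\Omega$ in the statement; this is where one checks that tensoring with $\tilde M^\vee$ over $\tilde A$ sends a projective presentation of $U$ to a projective presentation of $\tilde M^\vee\ten_{\tilde A}U=$ $M^\vee\tenA U$, exactly as in the proof of Proposition~\ref{Tateadj}, using that $\tilde M^\vee$ is finitely generated projective on both sides. Second, I would observe that the canonical adjunction isomorphisms $\Hombar_{\tilde A}(\tilde M\ten_{\tilde B}V,\Omega U)\cong$ $\Hombar_{\tilde B}(V,\tilde M^\vee\ten_{\tilde A}\Omega U)$ of Section~\ref{symmetricsection}, built from the tensor-Hom adjunction and the symmetrising forms, coincide with the horizontal maps in the statement. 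Third, I would invoke Proposition~\ref{Tateadj} verbatim, applied with $\tilde A$, $\tilde B$, $\tilde M$ in the roles of $A$, $B$, $M$: its hypotheses are met, so the resulting square commutes, and by the identifications of the first two steps this is exactly the square asserted in the corollary.

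The only genuinely content-bearing point, and the one I would flag as the main obstacle, is the verification that all the structure used to build Tate duality and the adjunction for the big algebras $\tilde A$, $\tilde B$ is the ``same'' structure one obtains by tensoring the data for $A$, $B$ with that for $C$ --- that is, that the symmetrising form of a tensor product of symmetric algebras is the tensor product of the symmetrising forms, that dualising commutes with the $C$-factor (so $\tilde M^\vee\cong$ $M^\vee\tenk C$ compatibly with all bimodule structures, using the isomorphism \ref{dualtensor}), and that under these identifications the maps $\Phi_{U,V}$ and the isomorphisms \ref{VPdual} for $\tilde A$ restrict to those for $A$ on the $C$-trivial part. Each of these is a routine but slightly tedious compatibility check; none involves a new idea. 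Once they are in place, the corollary is literally Proposition~\ref{Tateadj} read through a change of algebras, so there is nothing further to prove.
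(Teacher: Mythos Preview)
Your proposal is correct and follows essentially the same approach as the paper: reduce to Proposition~\ref{Tateadj} by replacing $A$, $B$, $M$ with $\tilde A=A\tenk C^0$, $\tilde B=B\tenk C^0$, and $\tilde M=M\tenk C$, and then identify the resulting diagram with the one in the statement via the natural isomorphisms $(M\tenk C)\ten_{\tilde B}V\cong M\tenB V$ and $(M\tenk C)^\vee\cong M^\vee\tenk C$. The paper's proof is terser about the compatibility checks you flag, but the strategy is identical.
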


\begin{proof}
We will show that this diagram is isomorphic to the 
commutative diagram from \ref{Tateadj}
applied to the algebras $A\tenk C^0$, $B\tenk C^0$
instead of $A$, $B$, respectively, and to the
$A\tenk C^0$-$B\tenk C^0$-bimodule $M\tenk C$,
respectively. 
In this commutative diagram, we identify the
terms through the following isomorphisms.
We consider $M\tenk C$ as an 
$(A\tenk C^0)$-$(B\tenk C^0)$-bimodule
as follows. The left $A\tenk C^0$-module structure on 
$M\tenk C$ is given by left multiplication with $A$ on 
$M$ and by right multiplication with $C$ on $C$.
Similarly, the right $B\tenk C^0$-module structure on
$M\tenk C$ is given by right multiplication with $B$ on
$M$ and by left multiplication with $C$ on $C$. 
We have an isomorphism of $B\tenk C^0$-modules
$(M\tenk C)\ten_{B\tenk C^0} V\cong$ $M\tenB V$
sending $(m\ten c)\ten v$ to $m\ten vc$, where
$m\in$ $M$, $v\in$ $V$, and $c\in$ $C$. Moreover,
since $C$ is symmetric, the choice of a symmetrising
form on $C$ yields an isomorphism of 
$(B\tenk C^0)$-$(A\tenk C^0)$-bimodules 
$(M\tenk C)^\vee\cong$ $M^\vee\tenk C^\vee\cong$ 
$M^\vee\tenk C$. This, in turn, yields an isomorphism of 
$B\tenk C^0$-modules 
$(M\tenk C)^\vee\ten_{A\tenk C^0} \Omega(U)\cong$
$M^\vee\tenA\Omega(U)$.  
With these identifications, the commutative diagram
under consideration takes the form as stated.
\end{proof}

\section{Transfer in Tate-Hochschild cohomology}
\label{Tatetransfersection}

Following \cite{Ligrblock},  a pair of adjoint functors between 
triangulated categories induces transfer maps between the graded 
centers of these categories as well as $\Ext$-groups. We briefly 
review this, specialised to Tate-Hochschild cohomology 
(cf. \cite[\S 7.1]{Ligrblock}). 
Let $A$, $B$ be symmetric $k$-algebras, with a fixed choice of
symmetrising forms $s\in$ $A^\vee$ and $t\in$ $B^\vee$, and let $M$
be an $A$-$B$-bimodule which is finitely generated projective
as a left $A$-module and as a right $B$-module.  Let $n$ be an integer.
We will write $\Sigma$ instead of $\Sigma_{A\tenk A^{0}}$ or
$\Sigma_{B\tenk B^{0}}$. An element $\zeta\in$ $\hatHH^n(B)$
is represented by a $B$-$B$-bimodule homomorphism, abusively denoted
by  the same letter,  $\zeta : B\to$ $\Sigma^n(B)$. We denote
by $\tr_M(\zeta)$ the element in $\hatHH^n(A)$ represented
by the $A$-$A$-bimodule homomorphism
$$\xymatrix{
M\tenB M^\vee = M\tenB B\tenB M^\vee
\ar[rrr]^(.45){\Id_M\ten\zeta\ten\Id_{M^\vee}} & & &
M\tenB \Sigma^n(B)\tenB M^\vee=\Sigma^n(M\tenB M^\vee) }$$
precomposed with the adjunction unit $\epsilon_{M^\vee}: A\to$
$M\tenB M^\vee$ and composed with the `shifted' adjunction counit
$\Sigma^n(\eta_M) : \Sigma^n(M\tenB M^\vee)\to$ $\Sigma^n(A)$.
The identification $M\tenB \Sigma^n(B)\tenB M^\vee=$
$\Sigma^n(M\tenB M^\vee)$ is to be understood as the
canonical isomorphism in $\modbar(A\tenk A^{0})$, using the
fact that the functor $M\tenB - \tenB M^\vee$ sends a
projective resolution of the $B$-$B$-bimodule $B$ to
a projective resolution of the $A$-$A$-bimodule $M\tenB M^\vee$.
Modulo this identification, we thus have
$$\tr_M(\zeta) = \Sigma^n(\eta_M)\circ
(\Id_M\ten\zeta\ten\Id_{M^\vee})\circ \epsilon_{M^\vee}\ .$$
In this way, $\tr_M$ becomes a graded $k$-linear but not necessarily
multiplicative map from $\hatHH^*(B)$ to $\hatHH^*(A)$. We will
need the following alternative description of transfer maps.

\begin{Lemma} \label{transferLemma}
For any integer $n$, the transfer map $\tr_M$ makes the following 
diagram commutative:
{\small
$$
\xymatrix{\Hombar_{B\tenk B^0}(B,\Sigma^n(B)) \ar[d] 
\ar[rr]^{\tr_M}
  &  & \Hombar_{A\tenk A^0}(A,\Sigma^n(A)) \\
\Hombar_{B\tenk B^0}(M^\vee\tenA M,\Sigma^n(B)) \ar[r]_-{\cong}
&  \Hombar_{A\tenk B^0}(M,\Sigma^n(M)) \ar[r]_-{\cong}
&  \Hombar_{A\tenk A^0}(A,\Sigma^n(M)\tenB M^\vee) \ar[u] }
$$
}
where the lower horizontal isomorphisms are adjunction isomorphisms,
the left vertical map us induced by precomposing with the
adjunction counit $M^\vee\tenA M\to$ $B$, and the right vertical
map is induced by composing with the map obtained from applying
$\Sigma^n$ to the adjunction counit $M\tenB M^\vee\to$ $A$.
\end{Lemma}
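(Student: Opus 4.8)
The plan is to unwind both composites in the diagram explicitly on a generic element and check they agree; everything reduces to the definition of $\tr_M$ together with the adjunction isomorphisms \ref{MMveeadj} and \ref{MveeMadj} recorded in \S\ref{symmetricsection}. Start with $\zeta\in$ $\hatHH^n(B)$, represented by a $B$-$B$-bimodule homomorphism $\zeta:B\to\Sigma^n(B)$. Going along the top of the diagram produces, by definition, $\tr_M(\zeta)=\Sigma^n(\eta_M)\circ(\Id_M\ten\zeta\ten\Id_{M^\vee})\circ\epsilon_{M^\vee}$. Going down and then across the bottom, I would track $\zeta$ through the three maps: first precompose with the counit $\eta_{M^\vee}:M^\vee\tenA M\to B$ to get $\zeta\circ\eta_{M^\vee}:M^\vee\tenA M\to\Sigma^n(B)$, viewed in $\Hombar_{B\tenk B^0}(M^\vee\tenA M,\Sigma^n(B))$; then apply the adjunction isomorphism $\Hombar_{B\tenk B^0}(M^\vee\tenA M,\Sigma^n(B))\cong\Hombar_{A\tenk B^0}(M,\Sigma^n(M))$ coming from the adjoint pair $(M^\vee\tenA-,\ M\tenB-)$; then apply the adjunction isomorphism $\Hombar_{A\tenk B^0}(M,\Sigma^n(M))\cong\Hombar_{A\tenk A^0}(A,\Sigma^n(M)\tenB M^\vee)$ coming from the adjoint pair $(-\tenB M^\vee,\ -\tenA M)$ (equivalently, from \ref{MMveeadj} with roles arranged appropriately); and finally compose with $\Sigma^n(\eta_M):\Sigma^n(M\tenB M^\vee)\to\Sigma^n(A)$.

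The heart of the matter is the triangular identity for adjunctions. Writing $F=M\tenB-$ and $G=M^\vee\tenA-$ (so $(G,F)$ and $(F,G)$ are both adjoint pairs, with units and counits as in \ref{MMveeadjunit} and \ref{MveeMadjunit}), the two successive adjunction isomorphisms rewrite a map $M^\vee\tenA M\to\Sigma^n(B)$ first as a map $M\to\Sigma^n(M\tenB B)=\Sigma^n(M)$ (transpose under $(G,F)$, which inserts $F$ applied to something and the counit $\eta_{M^\vee}$), and then as a map $A\to\Sigma^n(M\tenB M^\vee)$ (transpose under $(-\tenB M^\vee,-\tenA M)$, which inserts the unit $\epsilon_{M^\vee}:A\to M\tenB M^\vee$ and $-\tenA M$ applied to something). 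Carrying this out on $\zeta\circ\eta_{M^\vee}$, and using that $\eta_{M^\vee}\circ(\text{the inserted unit factors})$ collapses via the triangular identities $\Id_M=\eta_{M^\vee}\star\epsilon_{M^\vee}$ (on $M$) appropriately, the composite becomes exactly $(\Id_M\ten\zeta\ten\Id_{M^\vee})\circ\epsilon_{M^\vee}$ as a map $A\to\Sigma^n(M\tenB M^\vee)$. Postcomposing with $\Sigma^n(\eta_M)$ then yields precisely $\tr_M(\zeta)$, matching the top route.

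I expect the main obstacle to be bookkeeping rather than conceptual: one must keep straight four different (co)units — $\epsilon_M,\eta_M,\epsilon_{M^\vee},\eta_{M^\vee}$ — and the two bimodule-algebra contexts ($A\tenk A^0$, $B\tenk B^0$, $A\tenk B^0$) over which the stable categories and the shift $\Sigma$ live, making sure that the identification $M\tenB\Sigma^n(B)\tenB M^\vee=\Sigma^n(M\tenB M^\vee)$ used in the definition of $\tr_M$ is the same one implicit in the adjunction isomorphisms on the bottom row (it is, because $M\tenB-\tenB M^\vee$ sends a projective bimodule resolution of $B$ to one of $M\tenB M^\vee$, so $\Sigma^n$ commutes with these functors up to canonical isomorphism in the stable category). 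Once the identifications are pinned down, the verification is the standard ``transfer = unit–(middle map)–counit'' computation, and the triangular identities do all the work; I would present it as a diagram chase using the explicit formulas \ref{MMveeadjunit}, \ref{MveeMadjunit} and leave the fully spelled-out element-wise check to the reader, exactly in the spirit of \S\ref{symmetricsection}.
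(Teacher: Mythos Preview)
Your plan is correct and matches the paper's proof essentially step for step: track $\zeta$ down and around, use the triangular identity to collapse $\zeta\circ\eta_{M^\vee}$ under the first adjunction to $\Id_M\ten\zeta$, then use the second adjunction to get $(\Id_M\ten\zeta\ten\Id_{M^\vee})\circ\epsilon_{M^\vee}$, and finally postcompose with $\Sigma^n(\eta_M)$. One small slip in labeling: the second adjunction isomorphism uses the pair $(-\tenA M,\,-\tenB M^\vee)$ (whose unit is $\epsilon_{M^\vee}$), not $(-\tenB M^\vee,\,-\tenA M)$ as you wrote --- but since you invoke $\epsilon_{M^\vee}$ in the actual computation, the argument goes through unchanged.
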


\begin{proof}
The main theorem on adjoint functors describes adjunction isomorphisms
in terms of adjunction units and counits. Applied to the diagram in
the statement it implies that the composition of the two maps
$$\xymatrix{\Hombar_{B\tenk B^0}(B,\Sigma^n(B)) \ar[r]
&  \Hombar_{B\tenk B^0}(M^\vee\tenA M,\Sigma^n(B)) \ar[r]_{\cong}
&  \Hombar_{A\tenk B^0}(M,\Sigma^n(M))}$$
is equal to the map sending $\zeta\in$ 
$\Hombar_{B\tenk B^0}(B,\Sigma^n(B))$ to $\Id_M\ten\zeta$, where we
identify $M\tenB B=$ $M$ and $\Sigma^n(M)=$ $M\tenB\Sigma^n(B)$,
and where we use abusively the same letters for module homomorphisms 
and their classes in the stable category.
Similarly, the next adjunction isomorphism 
$$\xymatrix{ \Hombar_{A\tenk B^0}(M,\Sigma^n(M)) \ar[rr]_{\cong}
& & \Hombar_{A\tenk A^0}(A,\Sigma^n(M)\tenB M^\vee) }$$
sends $\Id_M\ten\zeta$ to 
$(\Id_M\ten\zeta\ten\Id_{M^\vee})\circ\epsilon_{M^\vee}$, where
$\epsilon_{M^\vee} : A\to$ $M\tenB M^\vee$ is the adjunction counit.
The right vertical map is induced by composition with $\Sigma^n(\eta_M)$,
and hence the image of $(\Id_M\ten\zeta\ten\Id_{M^\vee})\circ\epsilon_{M^\vee}$
is equal to $\Sigma^n(\eta_M)\circ(\Id_M\ten\zeta\ten\Id_{M^\vee})\circ\epsilon_{M^\vee}$.
By the remarks preceding this Lemma, this is equal to $\tr_M(\zeta)$.
\end{proof}

Let $V$, $W$ be finitely generated $B$-modules. An element in
$\hatExt^n_A(M\tenB V,M\tenB W)$ is represented by an $A$-homomorphism
$\eta : M\tenB V\to$ $M\tenB \Sigma^n(W)$, where we identify
$\Sigma^n(M\tenB W)=$ $M\tenB \Sigma^n(W)$ and where we use the same letter
$\Sigma$ for either $\Sigma_A$ or $\Sigma_B$. The transfer map
$\tr_{M^\vee}=$ $\tr_{M^\vee}(V,W)$ sends $\eta$ to the element
$\tr_{M^\vee}(\eta)$ in $\Ext_B^n(V,W)$ represented by the
$B$-homomorphism
$$\xymatrix{V \ar[rr]^(.4){\epsilon_M} & & M^\vee\tenA M\tenB V 
\ar[rr]^(.45){\Id_{M^\vee}\ten\eta} & & M^\vee\tenA M\tenB \Sigma^n(W)
\ar[rr]^(.6){\eta_{M^\vee}} & &  \Sigma^n(W) }$$
The transfer map $\tr_{M^\vee}$ admits the two following descriptions.

\begin{Lemma} \label{transferVWLemma}
For any integer $n$, the transfer map $\tr_{M^\vee}=$ $\tr_{M^\vee}(V,W)$ makes
the following diagram commutative:
$$\xymatrix{ & \Ext_B^n(V,M^\vee\tenA M\tenB W) \ar[rrd]^{(V,\eta_{M^\vee})} & & \\
\Ext_A^n(M\tenB V, M\tenB W) \ar[ru]^{\cong} \ar[rd]_{\cong} \ar[rrr]^{\tr_{M^\vee}} 
& & & \hatExt^n_B(V,W) \\
 & \hatExt_B^n(M^\vee\tenA M\tenB V, W) \ar[rru]_{(\epsilon_M,W)} & & } $$
Here the left two isomorphisms are the adjunction isomorphisms, and the 
maps labelled $(V,\eta_{M^\vee})$ and $(\epsilon_M,W)$ are induced by
composition and precomposition with $\eta_{M^\vee}$ and $\epsilon_M$,
respectively.
\end{Lemma}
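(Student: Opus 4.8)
The plan is to reduce the claimed commutativity to the basic characterisation of adjunction isomorphisms in terms of units and counits, exactly as in the proof of Lemma \ref{transferLemma}. First I would fix, for an element of $\hatExt^n_A(M\tenB V, M\tenB W)$, a representing $A$-homomorphism $\eta : M\tenB V\to M\tenB\Sigma^n(W)$, using the identification $\Sigma^n(M\tenB W)= M\tenB\Sigma^n(W)$. The two adjunction isomorphisms emanating from $\Ext_A^n(M\tenB V, M\tenB W)$ are instances of the adjoint pairs \ref{MveeMadj} and \ref{MMveeadj} (the first in the variable $W$, the second in the variable $V$); by the main theorem on adjoint functors, the upper isomorphism sends $\eta$ to $(\Id_{M^\vee}\ten\eta)\circ\epsilon_M$ (composed into $M^\vee\tenA M\tenB W$ via the unit $\epsilon_M : V\to M^\vee\tenA M\tenB V$ applied in the source $V$), and the lower one sends $\eta$ to $\eta\circ(\epsilon_M\ten\Id)$ viewed as a map $M^\vee\tenA M\tenB V\to M^\vee\tenA M\tenB\Sigma^n(W)$ precomposed appropriately.

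Next I would compute the two composites around the triangle. Along the top edge, postcomposition with $\eta_{M^\vee}: M^\vee\tenA M\tenB W\to W$ (i.e. the map labelled $(V,\eta_{M^\vee})$) turns $(\Id_{M^\vee}\ten\eta)\circ\epsilon_M$ into $\eta_{M^\vee}\circ(\Id_{M^\vee}\ten\eta)\circ\epsilon_M$, which is precisely the definition of $\tr_{M^\vee}(\eta)$ recalled just before the statement. Along the bottom edge, precomposition with $\epsilon_M$ (the map $(\epsilon_M,W)$) applied to the image of $\eta$ under the second adjunction isomorphism likewise yields $\eta_{M^\vee}\circ(\Id_{M^\vee}\ten\eta)\circ\epsilon_M$ after unwinding the identifications $M^\vee\tenA M\tenB B= M^\vee\tenA M$ and $M\tenB\Sigma^n(W)=\Sigma^n(M\tenB W)$, again using the unit/counit description of the adjunction. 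So both outer paths coincide with $\tr_{M^\vee}(\eta)$, giving commutativity of both the upper and lower portions of the diagram.

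The routine but slightly delicate point will be bookkeeping the two distinct adjunctions simultaneously: the horizontal map $M\tenB-$ in Theorem \ref{transferVWTatedual} uses \ref{MMveeadj} to move $V$ across, while the "output" leg of the transfer uses \ref{MveeMadj} with unit $\epsilon_M$ and counit $\eta_{M^\vee}$, and one must be careful that the same $\epsilon_M, \eta_{M^\vee}$ (as fixed in \ref{MMveeadjunit}, \ref{MveeMadjunit}) appear in both descriptions — this is exactly the "slight abuse of notation" flagged after \ref{MveeMadjunit}, identifying $M$ with $M^{\vee\vee}$. I expect no genuine obstacle: once the units and counits are named consistently, the verification is a direct application of the standard adjunction formalism, entirely parallel to Lemma \ref{transferLemma}. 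I would end by noting that naturality in $V$ and $W$ is automatic, since every map in the diagram is natural, so the stable-category (Tate) version follows from the module-category version by passing to $\Sigma^n(W)$ in place of $W$.

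\begin{proof}
As in the proof of Lemma \ref{transferLemma}, we use the characterisation of adjunction isomorphisms in terms of units and counits. An element of $\Ext_A^n(M\tenB V, M\tenB W)$ is represented by an $A$-homomorphism $\eta : M\tenB V\to M\tenB\Sigma^n(W)$, where we identify $\Sigma^n(M\tenB W)= M\tenB\Sigma^n(W)$.

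The upper left isomorphism is the adjunction isomorphism for the adjoint pair $(M^\vee\tenA-, M\tenB-)$ from \ref{MveeMadj}, applied in the second variable, with unit $\epsilon_M : V\to M^\vee\tenA M\tenB V$ and counit $\eta_{M^\vee} : M^\vee\tenA M\tenB W\to W$. By the main theorem on adjoint functors it sends $\eta$ to $(\Id_{M^\vee}\ten\eta)\circ\epsilon_M$ in $\Ext_B^n(V, M^\vee\tenA M\tenB W)$. Postcomposing with $\eta_{M^\vee}$ (the map $(V,\eta_{M^\vee})$) yields
$$\eta_{M^\vee}\circ(\Id_{M^\vee}\ten\eta)\circ\epsilon_M\ ,$$
which by the description of $\tr_{M^\vee}(V,W)$ preceding the statement equals $\tr_{M^\vee}(\eta)$. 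Hence the upper triangle commutes.

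The lower left isomorphism is the adjunction isomorphism for the adjoint pair $(M\tenB-, M^\vee\tenA-)$ from \ref{MMveeadj}, applied in the first variable. By the same theorem it sends $\eta$ to the $B$-homomorphism $M^\vee\tenA M\tenB V\to\Sigma^n(W)$ obtained from $\eta$ by precomposing with the map induced by $\epsilon_M$ and postcomposing with $\eta_{M^\vee}$ (modulo the identifications $M^\vee\tenA M\tenB B= M^\vee\tenA M$ and $M\tenB\Sigma^n(W)=\Sigma^n(M\tenB W)$). Precomposing this with the unit $\epsilon_M : V\to M^\vee\tenA M\tenB V$ (the map $(\epsilon_M,W)$) gives again $\eta_{M^\vee}\circ(\Id_{M^\vee}\ten\eta)\circ\epsilon_M= \tr_{M^\vee}(\eta)$. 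Hence the lower triangle commutes as well. Since all maps involved are natural in $V$ and $W$, this completes the proof.
\end{proof}
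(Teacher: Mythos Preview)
Your approach is exactly the paper's (which simply refers back to the argument of Lemma \ref{transferLemma}), and the upper-triangle computation is correct. However, the lower-triangle paragraph is garbled. The lower-left isomorphism $\hatExt_A^n(M\tenB V, M\tenB W)\to\hatExt_B^n(M^\vee\tenA M\tenB V, W)$ is an instance of \ref{MveeMadj} (with $M^\vee\tenA-$ left adjoint to $M\tenB-$, counit $\eta_{M^\vee}$), not of \ref{MMveeadj}; by the unit/counit description it sends $\eta$ to $\eta_{M^\vee}\circ(\Id_{M^\vee}\ten\eta)$ --- no $\epsilon_M$ appears at this stage. The subsequent precomposition with $\epsilon_M$ is precisely the map labelled $(\epsilon_M,W)$, and this yields $\eta_{M^\vee}\circ(\Id_{M^\vee}\ten\eta)\circ\epsilon_M=\tr_{M^\vee}(\eta)$. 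As you wrote it, $\epsilon_M$ would enter twice. You have also swapped the references \ref{MMveeadj} and \ref{MveeMadj} in both paragraphs: the upper isomorphism is \ref{MMveeadj}, whose unit is $\epsilon_M$ and whose counit is $\eta_M$ (not $\eta_{M^\vee}$); your formula $(\Id_{M^\vee}\ten\eta)\circ\epsilon_M$ is nevertheless correct. With these bookkeeping corrections the argument is complete and matches the paper.
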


\begin{proof}
This follows using the same arguments as in the proof of 
Lemma \ref{transferLemma}.
\end{proof}

\begin{Remark}
Let $n$ be an integer.  The elements of $\hatHH^n(A)$ are morphisms from 
$A$ to $\Sigma^n(A)$ in the stable category $\perfbar(A,A)$ of 
$A$-$A$-bimodules which are finitely generated projective as left and 
right $A$-modules. The category $\perfbar(A,A)$ is a thick subcategory 
of $\modbar(A\tenk A^0)$. 
Following \cite[3.1.(iii)]{Lintransfer} or \cite[5.1]{Ligrblock}, an 
element $\zeta\in$ $HH^n(A)$ is called {\it $M$-stable} if there is
$\eta\in$ $HH^n(B)$ such that $\Id_M\ten\zeta=$ $\eta\ten\Id_M :$ 
$M\to$ $\Sigma^n(M)$ in $\perfbar(A,B)$, where we identify as usual
$\Sigma^n(M)=$ $M\tenB\Sigma^n(B)=$ $\Sigma^n(A)\tenA M$.
Suppose that $M$ and $M^\vee$ induce a stable equivalence of Morita type
between $A$ and $B$. The functor $M\tenB - \tenB M^\vee$ induces
an equivalence of triangulated categories $\perfbar(B,B)\cong$ $\perfbar(A,A)$ 
sending $B$ to the bimodule $M\tenB M^\vee$ which is isomorphic to $A$ in 
$\perfbar(A,A)$. It follows that this functor induces a graded algebra 
isomorphism $\Phi_M : \hatHH^*(B)\cong$ $\hatHH^*(A)$. The adjunction maps
$\epsilon_M$, $\eta_{M}$, $\epsilon_{M^\vee}$, $\eta_{M^\vee}$ are
isomorphisms in the appropriate stable categories of bimodules. Thus every element
in $\hatHH^*(A)$ is $M$-stable.
It follows from \cite[3.6]{Lintransfer} that the isomorphism $\Phi_M$ is 
equal to the analogue for stable categories of the normalised transfer 
map $\Tr_M$ as defined in \cite[3.1.(ii)]{Lintransfer}. 
\end{Remark}

\section{Proof of Theorem \ref{transferTatedual} } \label{proofsection11}

We use the notation from the statement of Theorem \ref{transferTatedual}.
We identify $\Omega\Sigma^n(A)=$ $\Sigma^{n-1}(A)$ 
and $\Omega\Sigma^n(B)=$ $\Sigma^{n-1}(B)$.
The left vertical isomorphism in the diagram in Theorem
\ref{transferTatedual} is the composition
$$\Hombar_{A\tenk A^{0}}(A,\Sigma^{n-1}(A))\cong
\Hombar_{A\tenk A^{0}}(\Sigma^n(A),A))^\vee\cong
\Hombar_{A\tenk A^{0}}(A,\Sigma^{-n}(A)))^\vee\ ,$$
where the first isomorphism is the Tate duality isomorphism
\ref{TateHomdual} applied to $U=$ $\Sigma^n(A)$ and $V=$ $A$, and
where the second isomorphism is induced by the equivalence
$\Sigma^n$ on $\modbar(A\tenk A^{0})$. 
Thus, the commutativity of the diagram in Theorem \ref{transferTatedual}
is equivalent to the commutativity of the diagram
\begin{Equation} \label{transferTatedual2}
$$\xymatrix{ 
\Hombar_{A\tenk A^{0}}(A,\Sigma^{n-1}(A)) \ar[rrr]^{\tr_{M^\vee}} 
\ar[d] & & &  \Hombar_{B\tenk B^{0}}(B,\Sigma^{n-1}(B))  \ar[d] \\
(\Hombar_{A\tenk A^{0}}(\Sigma^n(A),A))^\vee 
\ar[rrr]_{(\Sigma^n \circ \tr_M \circ \Sigma^{-n})^\vee} & & &
(\Hombar_{B\tenk B^{0}}(\Sigma^n(B), B))^\vee  }$$
\end{Equation}
where the vertical maps are appropriate versions of the Tate 
duality isomorphism \ref{TateHomdual}. Lemma \ref{transferLemma}
describes the horizontal maps in this diagram as a composition of
four maps. The commutativity of this diagram will therefore be
established by combining four diagrams as follows. In all four
of those diagrams, the vertical maps are the relevant Tate duality
isomorphisms from \ref{TateHomdual}. Consider the diagram 
\begin{Equation} \label{d1}
$$\xymatrix{ 
\Hombar_{A\tenk A^0}(A,\Sigma^{n-1}(A)) \ar[rr] \ar[d] & & 
\Hombar_{A\tenk A^0}(M\tenB M^\vee, \Sigma^{n-1}(A)) \ar[d] \\
\Hom_{A\tenk A^0}(\Sigma^n(A),A)^\vee \ar[rr] & & 
\Hom_{A\tenk A^0}(\Sigma^n(A),M\tenB M^\vee)^\vee }$$
\end{Equation}
where the horizontal maps are induced by (pre-)composing with the
adjunction counit $M\tenB M^\vee\to$ $A$. The commutativity of
the diagram \ref{d1} follows from the naturality of Tate duality.
Consider next the diagram
\begin{Equation} \label{d2}
$$\xymatrix{ 
\Hombar_{A\tenk A^0}(M\tenB M^\vee,\Sigma^{n-1}(A)) \ar[rr] \ar[d] & & 
\Hombar_{B\tenk A^0}(M^\vee, \Sigma^{n-1}(M^\vee)) \ar[d] \\
\Hom_{A\tenk A^0}(\Sigma^n(A),M\tenB M^\vee)^\vee \ar[rr] & & 
\Hom_{B\tenk A^0}(\Sigma^n(M^\vee),M^\vee)^\vee }$$
\end{Equation}
where the horizontal maps are adjunction isomorphisms, modulo
identifying $M^\vee\tenA\Sigma^{n-1}(A)\cong$ $\Sigma^{n-1}(M^\vee)$
and $M^\vee\tenA\Sigma^n(A)\cong$ $\Sigma^n(M^\vee)$. The
commutativity of \ref{d2} is a special case of the compatibility
\ref{bimoduleTateadj} of Tate duality with adjunction.
Similarly, the analogous version of \ref{bimoduleTateadj} for the
adjoint pair $(-\tenA M,-\tenB M^\vee)$ yields 
the commutativity of the diagram 
\begin{Equation} \label{d3}
$$\xymatrix{ 
\Hombar_{B\tenk A^0}(M^\vee,\Sigma^{n-1}(M^\vee)) \ar[rr] \ar[d] & & 
\Hombar_{B\tenk B^0}(B, \Sigma^{n-1}(M^\vee)\tenA M) \ar[d] \\
\Hom_{B\tenk A^0}(\Sigma^n(M^\vee), M^\vee)^\vee \ar[rr] & & 
\Hom_{B\tenk B^0}(\Sigma^n(M^\vee)\tenA M, B)^\vee }$$
\end{Equation}
where the the horizontal maps are adjunction isomorphisms.
Finally, consider the diagram
\begin{Equation} \label{d4}
$$\xymatrix{ 
\Hombar_{B\tenk B^0}(B, \Sigma^{n-1}(M^\vee)\tenA M)\ar[rr] \ar[d] & & 
\Hombar_{B\tenk B^0}(B,\Sigma^{n-1}(B)) \ar[d]\\
\Hom_{B\tenk B^0}(\Sigma^n(M^\vee)\tenA M, B)^\vee \ar[rr] & &
\Hombar_{B\tenk B^0}(\Sigma^n(B),B) }$$
\end{Equation}
where the horizontal maps are induced by composition with the
adjunction counit $M^\vee\tenA M\to$ $B$, shifted by
$\Sigma^{n-1}$ or $\Sigma^n$ as appropriate. The commutativity
of \ref{d4} follows again from the naturality of Tate duality
\ref{TateHomdual}. Concatenating the four diagrams \ref{d1},
\ref{d2}, \ref{d3}, and \ref{d4} horizontally yields the
commutativity of the diagram \ref{transferTatedual2}, which
completes the proof of Theorem \ref{transferTatedual}.

\section{Proof of Theorem \ref{transferVWTatedual} } \label{proofsection13}

We use the notation from the statement of Theorem \ref{transferVWTatedual}.
It suffices to show the commutativity of the second of the two squares in the
diagram in the statement of \ref{transferVWTatedual}, 
since the first is obtained by duality, thanks to the fact that
applying Tate duality twice yields the canonical double duality.
After replacing $W$ by $\Sigma^n(W)$, it suffices to show the commutativity
of the diagram
\begin{Equation} \label{dgVW1}
$$\xymatrix{ \Hombar_A(M\tenB V, M\tenB \Omega(W)) \ar[rrr]^{\tr_{M^\vee}} \ar[d]
& &  & \Hombar_B(V,\Omega(W)) \ar[d] \\
\Hombar_A(M\tenB W, M\tenB V)^\vee \ar[rrr]_{(M\tenB - )^\vee} 
& & & \Hombar_B(W,V)^\vee}$$
\end{Equation}
where the vertical maps are versions of the Tate duality isomorphism
\ref{TateHomdual}. Lemma \ref{transferVWLemma} describes the map $\tr_{M^\vee}$ 
as a composition of two maps, and hence the commutativity of this diagram
will be established by combining the following two diagrams.
By \ref{Tateadj}, we have a commutative diagram
\begin{Equation} \label{dgVW2}
$$\xymatrix{ \Hombar_A(M\tenB V, M\tenB \Omega(W)) \ar[rr]^{\cong} \ar[d]
&  & \Hombar_B(V,M^\vee\tenA M\tenB \Omega(W)) \ar[d] \\
\Hombar_A(M\tenB W, M\tenB V)^\vee \ar[rr]_{\cong} 
& & \Hombar_B(M^\vee\tenA M \tenB W,V)^\vee}$$
\end{Equation}
where the horizontal isomorphisms are adjunction isomorphisms, and the
vertical isomorphisms are Tate duality isomorphisms.
Using the naturality of Tate duality applied with the couint
$\eta_{M^\vee}$ tensored by either $\Id_W$ or $\Id_{\Omega(W)}$ yields
a commutative diagram
\begin{Equation} \label{dgVW3}
$$\xymatrix{ \Hombar_B(V,M^\vee\tenA M\tenB \Omega(W) \ar[rr] \ar[d] & & 
\Hombar_B(V,\Omega(W)) \ar[d] \\
\Hombar_B(M^\vee\tenA M\tenB W, V)^\vee \ar[rr] & & \Hombar_B(W,V)^\vee }$$
\end{Equation}
Concatenating the two diagrams \ref{dgVW2} and \ref{dgVW3} yields the
diagram \ref{dgVW1}, where we use the description of $\tr_{M^\vee}$ from
Lemma \ref{transferVWLemma}. This proves Theorem \ref{transferVWTatedual}.

\section{Products in negative Tate cohomology} \label{negativeTatesection}

Let $A$ be a symmetric $k$-algebra. The results of this section have 
been obtained independently by Bergh, Jorgensen, and Oppermann 
\cite[\S 3]{BJO}. 
They are generalisations to symmetric algebras of results due to Benson
and Carlson in \cite{BeCaTate}, and the proofs we present here are 
straightforward adaptations of those given in \cite{BeCaTate}.
See also \cite{BeGr} for connections with Steenrod operations.

\begin{Lemma} \label{minusoneproducts}
Let $U$, $V$ be finitely generated $A$-modules, and let $n$ be an integer.
If $\zeta$ is a nonzero element in $\hatExt^{n-1}_A(V,U)$, then there is
a nonzero element $\eta$ in $\hatExt_A^{-n}(U,V)$ such that the
Yoneda product $\zeta\eta$ is nonzero in $\hatExt_A^{-1}(U,U)$.
\end{Lemma}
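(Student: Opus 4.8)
The plan is to exploit the nondegeneracy of the Tate duality pairing \ref{Tatenondeg} together with the compatibility \ref{TateYonedanondeg} of this pairing with Yoneda products. Suppose $\zeta\in$ $\hatExt^{n-1}_A(V,U)$ is nonzero. We want to produce $\eta\in$ $\hatExt^{-n}_A(U,V)$ with $\zeta\eta\neq 0$ in $\hatExt^{-1}_A(U,U)$.

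First I would apply Tate duality in the degree that detects $\zeta$ directly: the pairing
$$\langle -,-\rangle : \hatExt^{n-1}_A(V,U) \times \hatExt^{-n}_A(U,V) \to k$$
is nondegenerate, so from $\zeta\neq 0$ there is some $\eta\in$ $\hatExt^{-n}_A(U,V)$ with $\langle\zeta,\eta\rangle\neq 0$. I claim this $\eta$ already works. Indeed, consider the Yoneda product $\zeta\eta\in$ $\hatExt^{-1}_A(U,U)$ (using $m=n$ and the degree bookkeeping from \ref{TateYoneda}, with $W=U$ in that formula). The key point is to pair $\zeta\eta$ against a suitable element and recognise the value as $\langle\zeta,\eta\rangle$. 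Taking the identity-type element: more precisely, I would use the version of \ref{TateYonedanondeg} with the third module chosen so that the ``remaining'' factor is the identity of $\hatExt^0_A(U,U)$. Concretely, specialise $W=U$, $V=V$, $U=U$ and let $\tau=\mathrm{id}_U\in$ $\hatExt^0_A(U,U)$ be paired in via \ref{TateYonedanondeg} in the shape $\langle \zeta\eta, \tau\rangle = \langle \zeta, \eta\tau\rangle = \langle\zeta,\eta\rangle$, where $\eta\tau=\eta\circ\mathrm{id}=\eta$. Since the right-hand side is nonzero, $\langle\zeta\eta,\mathrm{id}_U\rangle\neq 0$, and in particular $\zeta\eta\neq 0$. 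This is exactly the adaptation of the Benson--Carlson argument to the symmetric-algebra setting.

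The step that needs the most care is matching degrees and making sure the instance of \ref{TateYonedanondeg} I invoke is legitimate: the pairing $\langle\zeta\eta,\mathrm{id}_U\rangle$ lives in the pairing $\hatExt^{-1}_A(U,U)\times\hatExt^{0}_A(U,U)\to k$, so I must check that the degrees $m+n-1$, $-m$, $-n$ appearing in the setup of \ref{TateYoneda} can indeed be realised with $m=n$, $W=U$, giving $\zeta\eta\in\hatExt^{-1}_A(U,U)$ and $\eta\tau\in\hatExt^{-n}_A(U,V)$; a quick substitution confirms this. One should also note that $\mathrm{id}_U$ is nonzero in $\hatExt^0_A(U,U)=\Endbar_A(U)$ precisely when $U$ is not projective — but if $U$ were projective then $\hatExt^{n-1}_A(V,U)=0$, contradicting $\zeta\neq 0$, so this case does not arise. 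Hence $U$ is nonprojective, $\mathrm{id}_U\neq 0$, and the argument goes through. This essentially completes the proof; the only genuine content is the nondegeneracy of \ref{Tatenondeg} and the product compatibility \ref{TateYonedanondeg}, both established earlier.
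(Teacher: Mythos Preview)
Your proof is correct and follows essentially the same route as the paper: pick $\eta$ by nondegeneracy of the Tate pairing, then apply \ref{TateYonedanondeg} with $\tau=\iota_U$ to get $\langle\zeta\eta,\iota_U\rangle=\langle\zeta,\eta\rangle\neq 0$. Your closing remark about $\iota_U\neq 0$ is harmless but unnecessary, since $\langle\zeta\eta,\iota_U\rangle\neq 0$ already forces $\zeta\eta\neq 0$ without any separate hypothesis on $\iota_U$.
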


\begin{proof} 
By Tate duality, if $\zeta$ is nonzero in $\hatExt^{n-1}_A(V,U)$, then
there is $\tau\in$ $\Ext_A^{-n}(U,V)$ such that $\langle \zeta, \tau\rangle\neq$ 
$0$. Denote by $\iota_U$ the image of $\Id_U$ in $\Endbar_A(U)=$
$\hatExt_A^0(U,U)$. Applying the appropriate version of \ref{TateYoneda} 
shows that $\langle \zeta\eta, \iota_U\rangle=$ $\langle \zeta , \eta\rangle\neq$
$0$, hence in particular, $\zeta\eta\neq$ $0$.
\end{proof}

\begin{Lemma} \label{nonzeroproducts}
Let $U$, $V$, $W$ be finitely generated $A$-modules, and let $m$, $n$ be 
integers. Let $\zeta\in$ $\hatExt^{n-1}_A(V,U)$ and $\eta\in$ $\hatExt_A^{m-1}(W,V)$
such that the Yoneda product $\zeta\eta$ is nonzero in $\hatExt_A^{m+n-2}(W,U)$.
Then there is $\tau\in$ $\hatExt^{-m-n+1}_A(U,W)$ such that the
Yoneda product $\eta\tau$ is nonzero in $\hatExt_A^{-n}(U,V)$.
\end{Lemma}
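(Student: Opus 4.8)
The plan is to derive Lemma \ref{nonzeroproducts} from Lemma \ref{minusoneproducts} together with the compatibility \ref{TateYonedanondeg} of Tate duality with Yoneda products, mimicking the corresponding argument of Benson and Carlson. First I would apply Lemma \ref{minusoneproducts} to the nonzero element $\zeta\eta\in$ $\hatExt_A^{m+n-2}(W,U)$: taking the integer there to be $m+n-1$, it produces a nonzero element $\rho\in$ $\hatExt_A^{-m-n+1}(U,W)$ such that the Yoneda product $(\zeta\eta)\rho$ is nonzero in $\hatExt_A^{-1}(U,U)$. By associativity of the Yoneda product this says $\zeta(\eta\rho)\neq$ $0$ in $\hatExt_A^{-1}(U,U)$.

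Next I would set $\tau=$ $\eta\rho\in$ $\hatExt_A^{-n}(U,V)$, which lands in the correct degree since $\eta$ has degree $m-1$ and $\rho$ has degree $-m-n+1$. It remains to check that $\tau$ is exactly the element we want, i.e. that $\eta\tau$ — which literally is $\eta(\eta\rho)$? No: I must be careful. The claimed conclusion is that $\eta\tau\neq$ $0$ in $\hatExt_A^{-n}(U,V)$, but with $\tau=\eta\rho$ the product $\eta\tau$ would have the wrong degree. So instead the natural candidate is simply $\tau:=\eta\rho\in\hatExt_A^{-n}(U,V)$ itself playing the role of the conclusion's $\tau$, and what we have shown is $\zeta\tau\neq 0$. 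But the statement asks for $\eta\tau\neq 0$, not $\zeta\tau\neq 0$. Thus the genuine approach is the other way round: I should instead invoke Tate duality directly. Since $\zeta\eta\neq$ $0$ in $\hatExt_A^{m+n-2}(W,U)$, nondegeneracy of the pairing \ref{Tatenondeg} (with the integer $m+n-1$) gives some $\rho\in$ $\hatExt_A^{-m-n+1}(U,W)$ with $\langle \zeta\eta,\rho\rangle\neq$ $0$. Now apply \ref{TateYonedanondeg}, with the grouping $\langle (\zeta)(\eta\rho)\rangle$, to get $\langle \zeta\eta,\rho\rangle=$ $\langle \zeta,\eta\rho\rangle$, so $\langle \zeta,\eta\rho\rangle\neq$ $0$. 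In particular $\eta\rho\neq$ $0$ in $\hatExt_A^{-n}(U,V)$, and setting $\tau=$ $\rho$ gives the desired $\tau\in$ $\hatExt_A^{-m-n+1}(U,W)$ with $\eta\tau=$ $\eta\rho\neq$ $0$.

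So the clean argument is: apply nondegeneracy of Tate duality to the nonzero element $\zeta\eta$ to produce $\tau\in$ $\hatExt_A^{-m-n+1}_A(U,W)$ with $\langle \zeta\eta,\tau\rangle\neq$ $0$; then apply the associativity relation \ref{TateYonedanondeg} in the form $\langle \zeta\eta,\tau\rangle=$ $\langle \zeta,\eta\tau\rangle$ to conclude $\langle \zeta,\eta\tau\rangle\neq$ $0$, whence $\eta\tau$ cannot be zero in $\hatExt_A^{-n}(U,V)$. The main point to get right is the bookkeeping of degrees and the exact shift at which \ref{TateYonedanondeg} is applied — here $\zeta\in\hatExt^{n-1}$, $\eta\in\hatExt^{m-1}$, and $\tau$ must sit in $\hatExt^{-m-n+1}$ so that $\eta\tau\in\hatExt^{-n}$ and $\zeta\eta\tau\in\hatExt^{-1}$, matching the indexing in \ref{TateYoneda} with the substitution of $m-1$ for $m$ and $n$ for $n$ there. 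I expect the only real subtlety, and hence the part deserving a careful line in the write-up, is verifying that the version of \ref{TateYonedanondeg} being invoked is the one with these particular degree parameters; the rest is immediate from nondegeneracy of the Tate pairing.
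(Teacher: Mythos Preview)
Your final ``clean argument'' is correct and proves the lemma. It is essentially the paper's proof with Lemma~\ref{minusoneproducts} unpacked: the paper simply applies Lemma~\ref{minusoneproducts} to the nonzero element $\zeta\eta\in\hatExt_A^{m+n-2}(W,U)$ to obtain $\tau\in\hatExt_A^{-m-n+1}(U,W)$ with $(\zeta\eta)\tau\neq 0$ in $\hatExt_A^{-1}(U,U)$, and then observes that $\zeta(\eta\tau)=(\zeta\eta)\tau\neq 0$ forces $\eta\tau\neq 0$.

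It is worth noting that your \emph{first} attempt was already this argument, and you abandoned it unnecessarily. Having obtained $\rho$ with $\zeta(\eta\rho)\neq 0$, the correct move is to set $\tau:=\rho$ (not $\tau:=\eta\rho$). Then $\tau$ lies in $\hatExt_A^{-m-n+1}(U,W)$ as required, and $\eta\tau=\eta\rho$ is nonzero simply because $\zeta(\eta\rho)\neq 0$ --- no further appeal to the pairing is needed. Your detour through \ref{TateYonedanondeg} works too, but it just re-proves the content of Lemma~\ref{minusoneproducts} inline rather than citing it.
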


\begin{proof}
By Lemma \ref{minusoneproducts} there is $\tau$ such that $\zeta\eta\tau$ is
nonzero in $\hatExt_A^{-1}(U,U)$. Then necessarily $\eta\tau$ is nonzero, whence
the result.
\end{proof}

For $U$, $V$ two $A$-modules, we denote by $\barExt^*_A(U,V)$ the nonnegative
part of $\hatExt_A^*(U,V)$. That is, for $n>0$ we have
$\barExt_A^n(U,V)=$ $\hatExt^n_A(U,V)=$ $\Ext_A^n(U,V)$, for $n<0$ we
have $\barExt_A^n(U,V)=$ $\Ext_A^n(U,V)=$ $\{0\}$, and
$\barExt_A^0(U,V)=$ $\hatExt_A^0(U,V)=$ $\Hombar_A(U,V)$, while
$\Ext_A^0(U,V)=$ $\Hom_A(U,V)$. 

\begin{Proposition} \label{depthone}
Let $U$ be a finitely generated $A$-module such that $\barExt^*_A(U,U)$ is
graded-commutative. Suppose that there are negative
integers $m$, $n$ such that $\hatExt_A^m(U,U)\cdot\hatExt^n_A(U,U)\neq$ $0$.
Then $\barExt_A^*(U,U)$ has depth at most one.
\end{Proposition}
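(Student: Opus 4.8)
The plan is to adapt the Benson--Carlson depth argument. Recall that $\barExt_A^*(U,U)$ is a graded-commutative ring containing the negative-degree products only insofar as they land in nonnegative degrees; the hypothesis gives negative integers $m$, $n$ with a nonzero product $\hatExt_A^m(U,U)\cdot\hatExt^n_A(U,U)\neq 0$. Pick $\zeta\in\hatExt_A^m(U,U)$ and $\eta\in\hatExt_A^n(U,U)$ with $\zeta\eta\neq 0$ in $\hatExt_A^{m+n}(U,U)$. The idea is to produce, out of $\zeta$ alone, a single nonzerodivisor-free situation: I want a homogeneous element $\xi$ of positive degree in $\barExt_A^*(U,U)$ which annihilates a nonzero element, so that $\barExt_A^*(U,U)$ modulo the ideal generated by any single positive-degree element of a homogeneous system of parameters still has a nonzero socle-type element, forcing depth $\le 1$. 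Concretely, depth at most one means that no regular sequence of length two exists, equivalently every positive-degree element of $\barExt_A^*(U,U)$ is a zerodivisor after quotienting by one parameter --- but the cleanest route is: exhibit a nonzero element of $\barExt_A^*(U,U)$ killed by \emph{all} of $\barExt_A^{>0}(U,U)$ localized appropriately, i.e. show the ``irrelevant'' ideal has a nonzero annihilator modulo one element.

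First I would use Lemma \ref{minusoneproducts} (or rather its proof technique via Tate duality \ref{TateYoneda}) to find, for the nonzero class $\zeta\eta\in\hatExt_A^{m+n}(U,U)$ with $m+n<0$, an element $\theta\in\hatExt_A^{-m-n-1}(U,U)$ with $\zeta\eta\theta\neq 0$ in $\hatExt_A^{-1}(U,U)$; here $-m-n-1\geq 0$ since $m+n\le -2$, so $\theta$ lies in $\barExt_A^*(U,U)$. Thus inside $\barExt_A^*(U,U)$ we have a nonzero element $\theta$ (degree $\ge 0$) and two negative-degree Tate classes $\zeta,\eta$ with $\zeta(\eta\theta)$ and $(\zeta\eta)\theta$ both making sense and equal to the same nonzero $\hatExt_A^{-1}$-class. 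Next I would feed this back in: set $\eta\theta=:\theta'\in\hatExt_A^{n-m-n-1}(U,U)=\hatExt_A^{-m-1}(U,U)$, again of degree $-m-1\ge 0$, and $\zeta\theta'\neq 0$ in $\hatExt_A^{-1}$. So we have a nonzero element $\theta'\in\barExt_A^{-m-1}(U,U)$ and a negative-degree class $\zeta$ with $\zeta\theta'$ nonzero. The point is that $\theta'$ is annihilated (up to landing in $\hatExt^{\le -1}$, i.e.\ killed in $\barExt$) by multiplication with high powers of any positive-degree element: if $\xi\in\barExt_A^{d}(U,U)$ with $d>0$, then $\xi^N\theta'$ has degree $Nd-m-1\to\infty$, and one argues via graded-commutativity together with the relation $\zeta\theta'\neq 0$ that a suitable power $\xi^N\theta'$ vanishes in $\barExt$ --- this is exactly the mechanism by which a ``negative product'' forces the Krull-dimension-one local cohomology $H^0$ or $H^1$ to be nonzero.

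More precisely, to get depth $\le 1$ it suffices (standard homological-algebra fact, cf.\ \cite{BeCaTate}) to produce a nonzero element $\theta'\in\barExt_A^*(U,U)$ together with a \emph{single} homogeneous parameter $\xi$ of positive degree such that $\xi$ is a nonzerodivisor but $\theta'$ has the property that the annihilator of the maximal graded ideal in $\barExt_A^*(U,U)/(\xi)$ is nonzero --- equivalently, that $H^1_{\mathfrak m}(\barExt_A^*(U,U))\neq 0$. I would show this by taking $\theta'$ as above and verifying that its image in any quotient by one positive-degree nonzerodivisor is still annihilated by the irrelevant ideal, using that $\zeta\theta'\neq 0$ sits in degree $-1$: multiplying $\theta'$ by enough positive-degree stuff must eventually be expressible, via the nonzero negative class $\zeta$ and graded-commutativity, in a way that collapses in $\barExt$. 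The main obstacle I anticipate is the bookkeeping around $\barExt$ versus $\hatExt$: the ``truncation'' to nonnegative degrees is not a ring homomorphism in any naive sense, so one must be careful that the products $\zeta\theta'$ used to detect nonvanishing live in $\hatExt$ while the depth statement is about $\barExt$; reconciling these --- essentially showing that the negative-Tate-cohomology obstruction genuinely propagates to a statement about the commutative ring $\barExt_A^*(U,U)$ --- is where the real content of the Benson--Carlson argument lies, and is the step I would write out most carefully.
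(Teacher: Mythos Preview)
Your setup through step 3 is exactly what the paper does: pick $\zeta\in\hatExt_A^m(U,U)$, $\eta\in\hatExt_A^n(U,U)$ with $\zeta\eta\neq 0$, and use Lemmas \ref{minusoneproducts}--\ref{nonzeroproducts} to produce $\theta\in\hatExt_A^{-m-n-1}(U,U)$ with $\zeta\eta\theta\neq 0$ in $\hatExt_A^{-1}(U,U)$, so that $\theta'=\eta\theta\in\barExt_A^{-m-1}(U,U)$ is nonzero and $\zeta\theta'\neq 0$. From here on, however, you do not have the argument, and the direction you sketch does not work.

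Your claim that ``a suitable power $\xi^N\theta'$ vanishes in $\barExt$'' is false as stated: $\theta'$ already lives in nonnegative degree, and multiplying by positive-degree elements only increases the degree; nothing forces vanishing. The element $\theta'$ is also the wrong witness, because it carries no built-in relation to a hypothetical regular element. The paper's key move, which you are missing, is this: assume for contradiction that $\zeta_1,\zeta_2\in\barExt_A^*(U,U)$ is a regular sequence of positive degrees $d_1,d_2$. Working with the \emph{negative-degree} element $\zeta$ (not with $\theta'$), one shows $\zeta_1^a\zeta\neq 0$ for all $a\geq 0$, and chooses $a$ maximal with $\deg(\zeta_1^a\zeta)<0$. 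Then $\zeta_1^{a+1}\zeta$ is a nonzero element of $\barExt_A^*(U,U)$ of degree strictly less than $d_1$, hence its image in $\barExt_A^*(U,U)/\zeta_1\barExt_A^*(U,U)$ is nonzero for degree reasons. On the other hand, for $b$ large enough that $\zeta_1^a\zeta\zeta_2^b$ has nonnegative degree, one has
\[
\zeta_2^b\cdot(\zeta_1^{a+1}\zeta)=\zeta_1\cdot(\zeta_1^a\zeta\zeta_2^b)\in\zeta_1\,\barExt_A^*(U,U),
\]
so $\zeta_2^b$ kills the image of $\zeta_1^{a+1}\zeta$ modulo $\zeta_1$, contradicting regularity of $\zeta_2$. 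The whole point is that the witness $\zeta_1^{a+1}\zeta$ is built from $\zeta_1$ itself, which is what makes the factorisation above possible; your $\theta'$ has no such structure, and the local-cohomology reformulation you mention does not supply the missing mechanism.
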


\begin{proof}
We follow the proof of \cite[Theorem 3.1]{BeCaTate}.
Suppose that $\Ext^*_A(U,U)$ has a regular sequence
of length $2$, consisting of homogeneous elements
$\zeta_1$, $\zeta_2$ of positive degrees $d_1$, $d_2$, respectively.
Let $\zeta\in$ $\hatExt_A^m(U,U)$ and $\eta\in$ $\hatExt_A^n(U,U)$
such that $\zeta\eta\neq$ $0$. By Lemma \ref{nonzeroproducts}, applied
with $m+1$, $n+1$ instead of $m$, $n$, respectively, there is
an element $\tau\in$ $\hatExt_A^{-m-n-1}(U,U)$ such that $\eta\tau\neq$ is
nonzero in $\hatExt^{-m-1}_A(U,U)$. Note that since $m$ is negative,
we have $-m-1\geq$ $0$. Since $\zeta_1$ is not a zero divisor, we 
have $\zeta_1^a\zeta\eta\neq$ $0$, hence $\zeta_1^a\zeta\neq$ $0$
for all nonnegative integers $a$. Choose $a$ maximal such that
$\deg(\zeta_1^a\zeta)<$ $0$. Then $0\leq$ $\deg(\zeta_1^{a+1}\zeta)<d_1$.
In particular, $\zeta^{a+1}\zeta$ is contained in $\barExt_A^*(U,U)$
but not in the ideal $\zeta_1\barExt_A^*(U,U)$. 
Thus the image of $\zeta_1^{a+1}\zeta$ in the quotient
$\barExt_A^*(U,U)/\zeta_1\barExt_A^*(U,U)$ is non zero.
However, for some sufficiently large integer $b$ we have
$\zeta_2^b\zeta_1^{a+1}\zeta=$ $\zeta_1(\zeta_1^a\zeta\zeta_2^b)$, 
hence $\zeta_2$ is not regular on this quotient. This contradiction 
shows that $\barExt_A^*(U,U)$ has no regular sequence of length two.
\end{proof}
 
Since the Hochschild cohomology of an algebra is graded-commutative
we obtain the following consequence:

\begin{Corollary} \label{HHdepthone}
Suppose that there are negative integers $m$, $n$ such that 
$\hatHH^m(A)\cdot\hatHH^n(A)\neq$ $0$.
Then $\barHH^*(A)$ has depth at most one.
\end{Corollary}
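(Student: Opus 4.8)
The plan is to deduce Corollary \ref{HHdepthone} directly from Proposition \ref{depthone} by specialising to the algebra $A\tenk A^0$ acting on the bimodule $A$. First I would recall that, by definition, $\hatHH^*(A)=$ $\hatExt^*_{A\tenk A^0}(A,A)$ and $\barHH^*(A)=$ $\barExt^*_{A\tenk A^0}(A,A)$, where $A\tenk A^0$ is again a symmetric $k$-algebra (as noted at the end of \S \ref{Tatedualitysection}, with symmetrising form $s$). So the statement is literally the instance of Proposition \ref{depthone} in which the symmetric algebra is $A\tenk A^0$ and the module $U$ is the $A\tenk A^0$-module $A$.

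The one hypothesis of Proposition \ref{depthone} that is not immediately of this shape is the requirement that $\barExt^*_{A\tenk A^0}(A,A)$ be graded-commutative. This is exactly the classical fact that Hochschild cohomology is graded-commutative, which holds in the Tate (and hence in the $\barHH$) setting as well; I would simply invoke it, as the sentence preceding the Corollary already does. Concretely, the cup product on $\hatHH^*(A)$ — equivalently, the Yoneda product on $\hatExt^*_{A\tenk A^0}(A,A)$ — is graded-commutative, and restricting to nonnegative degrees gives the graded-commutativity of $\barHH^*(A)$.

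With that in hand the proof is a one-line invocation: given negative integers $m$, $n$ with $\hatHH^m(A)\cdot\hatHH^n(A)\neq$ $0$, i.e. $\hatExt^m_{A\tenk A^0}(A,A)\cdot\hatExt^n_{A\tenk A^0}(A,A)\neq$ $0$, Proposition \ref{depthone} applied to the symmetric algebra $A\tenk A^0$ and the module $A$ yields that $\barExt^*_{A\tenk A^0}(A,A)=$ $\barHH^*(A)$ has depth at most one.

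I do not expect any real obstacle here; the only point that requires a word of justification rather than a pointer is graded-commutativity of the product on $\barHH^*(A)$, and that is standard and is precisely what the remark immediately before the Corollary is alluding to when it says ``since the Hochschild cohomology of an algebra is graded-commutative''. Everything else is just unwinding the definitions $\hatHH^* = \hatExt^*_{A\tenk A^0}(A,-)$ and feeding them into Proposition \ref{depthone}.
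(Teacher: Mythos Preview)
Your proposal is correct and matches the paper's approach exactly: the corollary is stated immediately after the remark that Hochschild cohomology is graded-commutative and is deduced as the specialisation of Proposition \ref{depthone} to the symmetric algebra $A\tenk A^0$ with $U=A$. No further argument is given or needed.
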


\end{document}